\documentclass[twocolumn]{autart}    

\usepackage{graphicx}          
\usepackage{amsmath}
\usepackage{amssymb}
\usepackage{float}

\usepackage{algorithm}
\usepackage{algpseudocode}
\usepackage{etoolbox}
\AtBeginEnvironment{algorithmic}{%
  \linespread{1}\selectfont%
  \setlength{\itemsep}{0pt}%
  \setlength{\parsep}{0pt}%
}
\newtheorem{Theorem}{Theorem}
\newtheorem{Lemma}{Lemma}

\newtheorem{Definition}{Definition}
\newtheorem{Corollary}{Corollary}
\newtheorem{example}{Example}
\newenvironment{proof}{\noindent\textbf{Proof.}}{\hfill$\square$}
\DeclareMathOperator*{\argmin}{arg\,min}

\usepackage{color-edits}
\addauthor{dy}{blue}

\begin{document}

\begin{frontmatter}

\title{Occupation-Measure Mean-Field Control: Optimization over Measures and Frank-Wolfe Methods\thanksref{footnoteinfo}} 
\thanks[footnoteinfo]{
Corresponding author: Chaoying Pei (cpk4t@mst.edu).}

\author[Purdue]{Di Yu}\ead{yu1128@purdue.edu},    
\author[Lilly]{Sixiong You}\ead{yousixiong@gmail.com},               
\author[MST]{Chaoying Pei}\ead{cpk4t@mst.edu}


\address[Purdue]{Department of Statistics, Purdue University, West Lafayette, IN 47907, USA}
\address[Lilly]{Eli Lilly and Company, Indianapolis, IN 46225, USA}
\address[MST]{Department of Mechanical and Aerospace Engineering, Missouri University of Science and Technology, Rolla, MO 65401, USA}
       
\begin{keyword}                           
Mean-field control; Occupation measure; Optimization over measures; Frank–Wolfe algorithm.                       
\end{keyword}

\begin{abstract}                          
Coordinating large populations of autonomous agents, such as UAV swarms or satellite constellations, poses significant computational challenges for traditional multi-agent control methods. This paper introduces a new optimization framework for large-population control, termed occupation-measure mean-field control (OM-MFC). The framework models the evolution of agent populations directly in the space of occupation measures and casts large-population control as an infinite-dimensional optimization problem over measures, which becomes convex under a positive-semidefiniteness condition on the interaction kernel. A Frank--Wolfe (FW) algorithm and its fully-corrective variant (FCFW) are developed to solve the resulting problem efficiently, where each iteration reduces to a classical optimal control subproblem. Theoretical results establish convexity, existence of optimal solutions, and convergence guarantees of the proposed algorithms. Owing to its measure-based formulation, the framework naturally accommodates systems with very large numbers of agents. Numerical experiments on UAV swarm coordination and satellite constellation control demonstrate the scalability and effectiveness of the proposed approach in high-dimensional and constrained environments.
\end{abstract}

\end{frontmatter}

\section{Introduction}
Recent advances in autonomous systems have enabled the deployment of large-scale multi-agent networks, including unmanned aerial vehicle (UAV) swarms, satellite constellations, and other cooperative robotic systems. As the population size increases, conventional multi-agent control frameworks encounter fundamental challenges in scalability and in coordinating interactions among numerous agents. 

These limitations have motivated the development of the mean-field control (MFC) paradigm, which models large-population coordination by treating the swarm’s time-varying state as a probability measure and steering its evolution under performance, interaction, and safety objectives \cite{elamvazhuthi2019mean}. Several lines of work have advanced this paradigm from different perspectives. Macroscopic formulations derived from stochastic processes and partial differential equations (PDEs) describe population dynamics at the distribution level and provide strong theoretical foundations, and have been applied to problems such as coverage, task allocation, and consensus \cite{fornasier2014mean}. Geometric viewpoints recast MFC within optimal transport, where distributional motion follows Wasserstein geodesics \cite{emerick2023continuum}. Learning-based approaches—notably deep reinforcement learning \cite{chen2020mean} and neural value-function surrogates \cite{ruthotto2020machine}—have demonstrated promising scalability in high-dimensional settings. Related iterative learning schemes, such as fictitious play and best-response dynamics, have also been studied extensively in the mean-field game literature for equilibrium computation under suitable assumptions \cite{cardaliaguet2017learning}. More recently, optimization-oriented approaches have also emerged, including MFC barrier functions for safety guarantees \cite{fung2025mean} and kernel-expansion methods with primal–dual schemes that yield convergent computational procedures \cite{vidal2025kernel}.
Taken together, these developments highlight significant progress in both modeling and computation for large-population control. At the same time, many existing approaches either rely on discretization of high-dimensional state spaces or involve inherently nonconvex formulations, which can limit scalability or theoretical tractability in large-scale settings. This motivates the development of alternative frameworks that preserve rigorous structure while remaining computationally scalable for large populations.

To address this gap, we introduce a new framework -- occupation-measure mean-field control (OM-MFC) -- for large-population control in the space of occupation measures. The framework is inspired by occupation-measure techniques that have been widely used in classical optimal control \cite{lasserre2008nonlinear}. In classical optimal control, occupation measures provide a powerful lifting of trajectory-based problems into the space of measures, where the dynamics become linear constraints and the formulation becomes amenable to convex relaxation and global analysis. Closely related occupation-measure ideas have also appeared in the mean-field games literature. For example, Hu et al. develop an online reinforcement learning framework for finite-state mean-field games in which a mean-field occupation measure represents the joint state–action distribution of the population and serves as the main decision variable for computing approximate Nash equilibria \cite{hu2025mf}. Other works use occupation measures to study self-interacting Brownian systems and McKean–Vlasov processes, where the measure encodes long-time path statistics and supports large-deviation analysis \cite{konig2017mean,du2025self}. 

In contrast to these lines of work, the proposed OM-MFC framework casts large-population control as a population-level optimization over occupation measures subject to linear dynamical constraints. This leads to an infinite-dimensional optimization problem over measures. Unlike prior measure-based approaches that rely on finite-dimensional relaxations or grid approximations, the proposed framework operates directly in the measure space, enabling efficient first-order algorithms and natural scalability to large agent populations.

Optimization over measures has a long history in statistics, signal processing, and machine learning~\cite{1960kie,2017boygeorec,2018mei}. Such formulations provide a natural way to represent distributions, mixtures, and population-level behaviors, and allow certain nonlinear or combinatorial problems to be expressed as convex programs in measure spaces. Existing approaches often approximate the infinite-dimensional problem in finite dimensions, which compromises accuracy and obscures its intrinsic structure. Grid-based discretizations, in particular, suffer from exponentially increasing cost with finer resolution, reflecting the curse of dimensionality~\cite{2017boygeorec,yu2025deterministic}. To {avoid finite-dimensional discretization} while maintaining scalability, we adopt an occupation-measure relaxation that formulates the problem directly in the space of measures. In this formulation, the first variation of the objective admits a closed-form expression, enabling the use of efficient first-order algorithms—in particular, the Frank–Wolfe (FW) method—with provable convergence guarantees in the infinite-dimensional setting.

Recent advances, building on the classical FW framework~\cite{jaggi2013revisiting}, have demonstrated that FW algorithms are well suited for optimization over measures~\cite{2017boygeorec,yu2025deterministic,yu2025derivative,yu2025frank}. Related generalized conditional-gradient methods have also been studied in infinite-dimensional and mean-field settings, particularly at the PDE or potential-game level~\cite{lavigne2023generalized}. Their projection-free nature makes them particularly effective in our setting, because under the OM-MFC formulation the dynamical constraints become linear in the occupation measures. This reformulation makes the FW framework especially natural, while its operation directly in the infinite-dimensional measure space eliminates the need for discretization. Building on these properties, we develop an FW algorithm within the OM-MFC, together with a fully-corrective Frank-Wolfe (FCFW), where problem~\eqref{mainObj} is solved iteratively through a sequence of classical optimal control subproblems. Each subproblem can be efficiently handled using existing optimal control techniques, resulting in a simple yet scalable procedure that preserves feasibility, achieves the standard $\mathcal{O}(1/k)$ convergence rate, and provides closed-form, trajectory-based updates.


OM-MFC can be viewed as a population-level relaxation of the finite-agent interacting control problem and, at the PDE level, as a relaxed-control generalization of classical MFC. This viewpoint is useful not only for computational tractability, but also because it preserves the population-level dynamical structure of the original problem and remains closely connected to classical MFC at the PDE level. A preliminary version of this work appeared in~\cite{yuyoupei2026ACC}.
The main contributions of this work are summarized as follows:
\begin{itemize}
    \item \textbf{OM-MFC framework:} We introduce OM-MFC, a direct optimization-over-measures framework for large-population control. The formulation is posed as an optimization over occupation measures subject to linear dynamical constraints, and we establish its convexity, weak$^*$ compactness, and existence of optimal solutions. 
    \item \textbf{Infinite-dimensional solution algorithm:} We develop FW and FCFW algorithms that operate directly in the space of measures. We show that the linear minimization subproblem reduces to a family of classical optimal control problems parameterized by the initial condition, yielding a projection-free method with $\mathcal{O}(1/k)$ convergence guarantees and finite mixtures of admissible classical trajectories.
    \item \textbf{Scalability and validation:} Numerical experiments show qualitative agreement with PDE-based references in two-dimensional settings, and demonstrate scalability to more complex three-dimensional problems, including swarm coordination with multiple obstacles and satellite-constellation control.
\end{itemize}

Building on these properties, we develop a Frank--Wolfe (FW) algorithm within the OM-MFC framework, together with a fully-corrective Frank--Wolfe (FCFW) variant, to solve problem~\eqref{mainObj} through a sequence of classical optimal control subproblems. This yields a projection-free and scalable procedure that preserves feasibility, achieves the standard $\mathcal{O}(1/k)$ convergence rate, and produces trajectory-based updates.

The remainder of this paper is structured as follows.
Section~\ref{sec:motivation} introduces the finite-agent motivation and its occupation-measure representation.
Section~\ref{sec:main_problem} presents the OM-MFC framework as a measure-theoretic relaxation.
Section~\ref{sec:properties} analyzes the fundamental properties of the resulting measure optimization problem.
Section~\ref{sec:fw_methods} develops FW methods for the proposed framework and establishes their theoretical guarantees.
Section~\ref{simulation} reports numerical results in both two- and three-dimensional scenarios, including swarm and satellite-constellation applications.
Finally, Section~\ref{conclusion} concludes the paper and discusses future research directions.

\section{Motivation: The N-Agent Problem}\label{sec:motivation}
To motivate the OM-MFC framework, we begin by introducing the classical $N$-agent population model, which captures the discrete dynamics of large-scale multi-agent systems. We formulate the corresponding finite-agent cost functional and show how it can be expressed in terms of empirical occupation measures. This representation provides the foundation for the measure-theoretic formulation developed in Section~\ref{sec:main_problem}.

\subsection{$N$-Agent Dynamics and Cost Functional}

We model a large population of controlled systems indexed by $i=1,\dots,N$ over a finite horizon $[0,T]$, where the $i$-th unit has state $x_i(t)\in\mathbb{R}^{n_x}$ and control input $u_i(t)\in\mathbb{R}^{n_u}$.  
The evolution of each agent is governed by a common nonlinear dynamics map $f:\mathbb{R}^{n_x}\times\mathbb{R}^{n_u}\to\mathbb{R}^{n_x}$,
\begin{equation}
    \dot{x}_i(t) = f\big(x_i(t),u_i(t)\big), \quad t\in[0,T],\; i=1,\dots,N,
\end{equation}
where $f$ is continuous and Lipschitz in $x$ uniformly in $u$.
The agents are assumed identical and interchangeable, and the empirical distribution of their states at time $t$ is described by
\begin{equation}
    \rho_t^N := \frac{1}{N}\sum_{i=1}^N \delta_{x_i(t)}.
\end{equation}
A broad range of mission objectives and interaction effects can be captured through the $N$-agent cost
\begin{align}\label{eq:NagentMF}
J_N &= \frac{1}{N}\sum_{i=1}^N \int_0^T 
\Big( \ell_0\big(t,x_i(t),u_i(t)\big)
+ \lambda \,(W*\rho_t^N)\big(x_i(t)\big)\Big)\,dt \nonumber\\
&\quad + \int_{\mathcal X} \Psi(x)\,d\rho_T^N(x),
\end{align}
where $\ell_0$ encodes individual performance criteria (such as control effort, tracking error, or energy use), $W$ is an interaction kernel capturing pairwise influences including collision avoidance, cohesion, or formation keeping, and $\Psi$ specifies terminal mission objectives (for example, target-region reachability or desired spatial configurations).  
This cost function is sufficiently general to represent many large-population tasks considered in MFC \cite{carmona2018probabilistic}.

\subsection{Measure Representation of Single Trajectories}
Let
\[
\Sigma=[0,T]\times\mathcal X\times\mathcal U,
\]
where $\mathcal X\subset\mathbb R^{n_x}$ and $\mathcal U\subset\mathbb R^{n_u}$ are compact state and control sets. We write $\mathcal M(\Sigma)$ for the space of finite signed Borel measures on $\Sigma$, $\mathcal M_+(\Sigma)$ for its nonnegative cone, and
\[
\mathcal M_+(\Sigma;T):=\{\mu\in\mathcal M_+(\Sigma):\mu(\Sigma)=T\}.
\]
Similarly, $\mathcal M_+(\mathcal X;1)$ denotes the set of probability measures on $\mathcal X$. For any admissible trajectory $\omega$ in $\Gamma$
\begin{align}\label{eq:trajSpace}
\Gamma
:= &\Big\{
\omega = (x(\cdot),u(\cdot)) :\;
x \in AC([0,T];\mathcal X),\; \nonumber\\
& \quad u: [0,T] \to \mathcal U \text{ is measurable},\;\nonumber\\
& \quad \dot x(t)=f(x(t),u(t))\ \text{for a.e. } t\in[0,T]
\Big\},
\end{align}
we define the associated \emph{occupation measures} $(\mu[\omega],\nu[\omega])$. The \emph{running occupation measure} $\mu[\omega]\in\mathcal{M}_+(\Sigma,T)$ is characterized by, for any $\varphi\in C(\Sigma)$,
\begin{equation}\label{eq:runningMeasure}
\int_\Sigma \varphi(t,x,u)\,d\mu[\omega](t,x,u) 
= \int_0^T \varphi(t,x(t),u(t))\,dt.
\end{equation} 
The \emph{terminal occupation measure} 
$\nu[\omega]\in \mathcal{M}_+(\mathcal{X},1)$ is given by \(\nu[\omega]=\delta_{x(T)},\) or equivalently,
\begin{equation}\label{eq:terminalMeasure}
\int_{\mathcal X}\psi(x)\,d\nu[\omega](x)=\psi(x(T)),
\qquad \forall \psi\in C(\mathcal X).
\end{equation}
Thus $(\mu[\omega],\nu[\omega])$ provides a measure-theoretic representation of the trajectory $\omega$. Averaging such single-trajectory measures across agents yields the empirical occupation measures used below to rewrite the $N$-agent cost.

\subsection{Reformulation via Occupation Measures}
The following theorem states the occupation–measure representation of the N-agent cost; a related presentation appears in~\cite{yuyoupei2026ACC}.
\begin{Theorem}[Occupation–measure form of $J_N$]\label{Thm:objJN}
Consider agents $i=1,\dots,N$ with trajectories $\omega_i$ and associated occupation measures $(\mu[\omega_i],\nu[\omega_i])$.
Define the empirical averages
$\mu^N:=\tfrac1N\sum_{i=1}^N \mu[\omega_i]\in\mathcal M_+(\Sigma,T)$ and
$\nu^N:=\tfrac1N\sum_{i=1}^N \nu[\omega_i]\in\mathcal M_+(\mathcal X,1)$.
Suppose that $\ell_0$ and $\Psi$ are bounded continuous, and that $W$ is a measurable function with finite integrals.
Then the cost functional \eqref{eq:NagentMF} of the $N$-agent system can be expressed as
\begin{align}\label{eq:JN-measure}
J_N &= \int_{\Sigma} \ell_0(t,x,u)\,d\mu^N(t,x,u) \nonumber\\
& + \lambda\!\!\iint_{\Sigma\times\Sigma}\!\! W(x-y)\,\delta(t-t')\,d\mu^N(t,x,u)\,d\mu^N(t',y,v) \nonumber\\
& + \int_{\mathcal X}\Psi(x)\,d\nu^N(x).
\end{align}
Here the symbol $\delta(t-t')$ denotes the Dirac distribution and enforces $t=t'$ in the interaction term.

In addition, the pair $(\mu^N,\nu^N)$ satisfies the following averaged weak Liouville relation:
for every $v\in C^1([0,T]\times\mathcal X)$,
\begin{align}\label{eq:weak-Liouville-N}
\int_{\mathcal X} v(T,x)\,d\nu^N(x)
&- \int_{\mathcal X} v(0,x)\,d\rho_0^N(x) \nonumber\\
&= \int_{\Sigma}\big(\partial_t v+\nabla_x v\!\cdot\! f(x,u)\big)\,d\mu^N,
\end{align} where $\rho_0^N = \tfrac1N\sum_{i=1}^N \delta_{x_i(0)}$ denotes the initial empirical distribution.
\end{Theorem}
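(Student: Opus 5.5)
The plan is to verify the three terms of \eqref{eq:JN-measure} one at a time, using linearity of the integral in the measure together with the defining identities \eqref{eq:runningMeasure} and \eqref{eq:terminalMeasure}. Then I will obtain \eqref{eq:weak-Liouville-N} by applying the fundamental theorem of calculus along each trajectory and averaging over the agents.

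\textbf{Running and terminal terms.} Since $\ell_0$ is bounded and continuous, applying \eqref{eq:runningMeasure} with $\varphi=\ell_0$ to each $\omega_i$ gives $\int_\Sigma \ell_0\,d\mu[\omega_i]=\int_0^T\ell_0(t,x_i(t),u_i(t))\,dt$. Averaging over $i$ and using $\mu^N=\frac1N\sum_i\mu[\omega_i]$ yields the first term. For the terminal term I use $\nu[\omega_i]=\delta_{x_i(T)}$, which gives $\int\Psi\,d\nu^N=\frac1N\sum_i\Psi(x_i(T))=\int\Psi\,d\rho_T^N$. If $\ell_0$ is only bounded measurable, I will first extend \eqref{eq:runningMeasure} from $C(\Sigma)$ to bounded Borel $\varphi$. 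This follows because $\mu[\omega]$ is the pushforward of Lebesgue measure on $[0,T]$ under $t\mapsto(t,x(t),u(t))$, by a monotone class argument.

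\textbf{Interaction term (main obstacle).} The expression $\delta(t-t')$ must be made rigorous. I will interpret it through disintegration: $\mu[\omega_i]$ has time marginal equal to Lebesgue measure on $[0,T]$, with conditional law $\delta_{(x_i(t),u_i(t))}$. Hence $\mu^N(dt,dx,du)=dt\otimes\frac1N\sum_i\delta_{(x_i(t),u_i(t))}$. The double integral with the time-diagonal constraint is then defined as
\[
\int_0^T\!\iint W(x-y)\,d\mu^N_t(x,u)\,d\mu^N_t(y,v)\,dt,
\]
where $\mu^N_t$ is the conditional measure. Substituting the atomic form gives $\int_0^T\frac1{N^2}\sum_{i,j}W(x_i(t)-x_j(t))\,dt$.

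On the other side, $(W*\rho_t^N)(x_i(t))=\frac1N\sum_jW(x_i(t)-x_j(t))$. So $\frac1N\sum_i\int_0^T\lambda(W*\rho_t^N)(x_i(t))\,dt$ equals $\lambda$ times the same double sum. The finite-integral assumption on $W$ is what justifies exchanging the finite sums with the time integral and guarantees that $t\mapsto W(x_i(t)-x_j(t))$ is integrable. I will also note that the diagonal terms $i=j$ contribute $W(0)$, so $W(0)$ must be finite, which the assumption covers. Equivalently, the diagonal term can be read as the limit of mollifiers $\eta_\epsilon(t-t')$, using Lebesgue differentiation in $t$.

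\textbf{Liouville relation.} For $v\in C^1$ and an absolutely continuous $x_i$, the map $t\mapsto v(t,x_i(t))$ is absolutely continuous. Its derivative is $\partial_tv+\nabla_xv\cdot f(x_i,u_i)$ almost everywhere, using $\dot x_i=f(x_i,u_i)$. Integrating over $[0,T]$ gives $v(T,x_i(T))-v(0,x_i(0))=\int_0^T(\partial_tv+\nabla_xv\cdot f)(t,x_i(t),u_i(t))\,dt$. The integrand is continuous on $\Sigma$, so \eqref{eq:runningMeasure} applies and the right side equals $\int_\Sigma(\cdots)\,d\mu[\omega_i]$. The left side is $\int v(T,\cdot)\,d\nu[\omega_i]-v(0,x_i(0))$. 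Averaging over $i$ gives \eqref{eq:weak-Liouville-N}, with $\rho_0^N$ as defined in the statement.
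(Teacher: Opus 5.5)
Your proposal is correct and follows essentially the same route as the paper: you split $J_N$ into running, interaction, and terminal parts and evaluate each through the defining identities \eqref{eq:runningMeasure}--\eqref{eq:terminalMeasure}, reducing the interaction term to $\frac{\lambda}{N^2}\sum_{i,j}\int_0^T W(x_i(t)-x_j(t))\,dt$ (your disintegration reading is exactly the paper's rigorous interpretation \eqref{eq:interactionRig}), and you obtain \eqref{eq:weak-Liouville-N} from the chain rule along each trajectory, integration over $[0,T]$, and averaging over agents. One caution concerns your side remark that $\delta(t-t')$ can ``equivalently'' be read as a mollifier limit: this needs continuity of $W$, since for a merely measurable kernel such as $W=\mathbf{1}_{\{0\}}$ with $N=1$ and an injective trajectory the mollified double integral vanishes while the same-time integral equals $T\,W(0)$; your main argument does not rely on that remark, so the proof stands.
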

\begin{proof}
Decompose the cost $J_N$ in~\eqref{eq:NagentMF} into the running, interaction, and terminal terms. By the definition of the running occupation measure~\eqref{eq:runningMeasure},
\[
\frac{1}{N}\sum_{i=1}^N \int_0^T \ell_0(t,x_i(t),u_i(t))\,dt
=
\int_{\Sigma} \ell_0(t,x,u)\,d\mu^N(t,x,u).
\]
For the interaction term, using $\mu^N=\tfrac1N\sum_{i=1}^N\mu[\omega_i]$, the definition of the occupation measures and $\rho_t^N=\tfrac1N\sum_{j=1}^N\delta_{x_j(t)}$,
\begin{align*}
    \lambda\iint &W(x-y)\,\delta(t-t')\,d\mu^N(t,x,u)\,d\mu^N(t',y,v) \\&\quad=\lambda\int_0^T \frac{1}{N^2}\sum_{i,j=1}^N
W\bigl(x_i(t)-x_j(t)\bigr)\,dt \\&\quad = \frac{\lambda}{N}\sum_{i=1}^N \int_0^T (W*\rho_t^N)(x_i(t))\,dt.
\end{align*}
Similarly, by~\eqref{eq:terminalMeasure},
\(
\int_{\mathcal X}\Psi(x)\,d\nu^N(x)
=
\frac1N\sum_{i=1}^N \Psi(x_i(T)).
\)
Combining the three identities yields~\eqref{eq:JN-measure}. To prove~\eqref{eq:weak-Liouville-N}, fix $v\in C^1([0,T]\times\mathcal X)$. For each agent,
\[
\frac{d}{dt}v(t,x_i(t))
=
\partial_t v(t,x_i(t))+\nabla_x v(t,x_i(t))\cdot f(x_i(t),u_i(t)).
\]
Integrating on $[0,T]$ and averaging over $i=1,\dots,N$ gives
\begin{align*}
\frac1N\sum_{i=1}^N &v(T,x_i(T))
-
\frac1N\sum_{i=1}^N v(0,x_i(0))
\\&\qquad=
\frac1N\sum_{i=1}^N \int_0^T
\bigl(\partial_t v+\nabla_x v\cdot f(x_i,u_i)\bigr)\,dt.
\end{align*}
Using~\eqref{eq:runningMeasure}, \eqref{eq:terminalMeasure}, and
$\rho_0^N=\tfrac1N\sum_{i=1}^N\delta_{x_i(0)}$ gives~\eqref{eq:weak-Liouville-N}.
\end{proof}

The averaged occupation measures $(\mu^N,\nu^N)$ encode the
same distributional information as the empirical measures $\rho_t^N$ and
$\rho_T^N$ in the $N$-agent formulation~\eqref{eq:NagentMF}. In particular, $d\mu^N(t,x,u)=dt\,\mu_t^N(dx,du)$ with
$\mu_t^N=\tfrac1N\sum_{i=1}^N\delta_{(x_i(t),u_i(t))}$,
and the state marginal recovers the empirical distribution
\[
\rho_t^N = \pi_x\#\mu_t^N
= \tfrac{1}{N}\sum_{i=1}^N \delta_{x_i(t)},
\]
where $\pi_x$ denotes the projection onto the state component. Similarly, the terminal measure $\nu^N$ coincides with the empirical terminal distribution $\rho_T^N$. Accordingly, the interaction term in \eqref{eq:JN-measure} is understood rigorously as
\begin{equation}\label{eq:interactionRig}
\lambda \int_0^T \iint_{\mathcal X\times\mathcal U\times\mathcal X\times\mathcal U}
W(x-y)\,d\mu_t^N(x,u)\,d\mu_t^N(y,v)\,dt,
\end{equation}
and the notation involving $\delta(t-t')$ is used as a formal shorthand for same-time interactions.

Thus both the cost and the averaged dynamics can be expressed entirely in terms of $(\mu^N,\nu^N)$ through \eqref{eq:JN-measure} and \eqref{eq:weak-Liouville-N}. This empirical measure representation motivates the convex relaxation introduced in the next section.

\section{The OM-MFC Framework: A Measure-Theoretic Relaxation}\label{sec:main_problem}

\subsection{Problem Statement}
\label{subsec:formulation}
While Theorem~\ref{Thm:objJN} provides an empirical measure representation for any finite $N$-agent system, optimizing directly over the space of $N$-agent trajectories remains a computationally intensive nonconvex task. To achieve scalability and analytical tractability, we propose a \emph{convex relaxation} by extending the feasible set to the space of measures that satisfy the ensemble dynamics. Specifically, let $\rho_0 \in \mathcal{M}_+(\mathcal{X},1)$ be a prescribed initial distribution. We define the feasible set $\Delta \subset \mathcal M_+(\Sigma,T)\times\mathcal M_+(\mathcal X,1)$ as the collection of all measure pairs $(\mu,\nu)$ satisfying the weak Liouville constraint: for all test functions $v\in C^1([0,T]\times\mathcal X)$,
\begin{align}\label{eq:feasibleSet}
&\int_{\mathcal X} v(T,x)\,d\nu(x) - \int_{\mathcal X} v(0,x)\,d\rho_0(x) \nonumber \\&\qquad =\int_{\Sigma} \Big(\partial_t v+\nabla_x v\cdot f(x,u)\Big)\,d\mu(t,x,u).
\end{align}
By lifting the problem from discrete trajectories to the convex set $\Delta$, we define the \emph{Occupation-Measure Mean-Field Control} (OM-MFC) problem as:
\begin{align}\label{mainObj}
\min_{(\mu,\nu)\in\Delta} \quad & 
J(\mu,\nu) :=
\int_{\Sigma} \ell_0(t,x,u)\,d\mu(t,x,u) \nonumber\\
& + \lambda \iint
W(x-y)\,\delta(t-t')\,d\mu(t,x,u)\,d\mu(t',y,v) \nonumber\\
& + \int_{\mathcal X} \Psi(x)\,d\nu(x). \tag{OM-MFC}
\end{align}
Here the interaction term is understood in the same rigorous sense as in \eqref{eq:JN-measure} and \eqref{eq:interactionRig}. The formulation \eqref{mainObj} represents an infinite-dimensional program over the space of Borel measures. The constraint set $\Delta$ is convex, and the objective becomes convex under suitable assumptions on the interaction kernel, supporting the use of projection-free solvers like the FW method.

\subsection{Relaxation and Relation to Classical MFC}\label{relation}
\label{subsec:discussion}

Problem~\eqref{mainObj} provides a measure-theoretic generalization of classical MFC formulations. The difference originates already at the $N$-agent level: in~\eqref{eq:NagentMF}, each agent is allowed to apply its own control input $u_i(t)$, without restriction to a common deterministic feedback policy of the form $u_i(t)=\phi(t,x_i(t))$ as typically assumed in classical MFC \cite{carmona2018probabilistic}. The associated averaged occupation measure $\mu^N$ therefore encodes \emph{heterogeneous} control actions across the population. Our measure formulation preserves this generality and does not impose a deterministic feedback structure on the control.

This modeling choice leads to a generalized population-level PDE. As shown in Appendix~\ref{sec:PDE}, any feasible $(\mu,\nu)\in\Delta$ induces a state marginal $\rho_t$ satisfying the continuity equation
\begin{equation}\label{eq:relaxed_pde}
    \partial_t \rho_t + \nabla_x \cdot \big( F(t,x) \rho_t \big) = 0,
\end{equation}
where $F(t,x) = \int_{\mathcal U} f(x,u) \, d\lambda_{t,x}(u)$ is the averaged dynamics under a state-time dependent control distribution $\lambda_{t,x}$. This equation coincides with the classical MFC PDE only in the special case where $\lambda_{t,x} = \delta_{\phi(t,x)}$, corresponding to a deterministic feedback control $u = \phi(t,x)$. Such relaxed formulations, where controls are replaced by probability measures on the action space, are a cornerstone of existence theory and limit analysis in optimal control and mean-field systems \cite{fattorini1999infinite,lacker2017limit}.

While this framework generalizes classical MFC by allowing heterogeneous actions, it also facilitates a \emph{convex relaxation} of the original $N$-agent problem. Since the velocity field $F(t,x)$ defined in \eqref{eq:relaxed_pde} is, by construction, a convex combination of classical velocities from the set $\{f(x,u) : u \in \mathcal{U}\}$, it can take any value in the convexified velocity set
\begin{equation}
    \operatorname{co} f(x, \mathcal{U}) := \left\{ \int_{\mathcal{U}} f(x,u) \, d\lambda(u) : \lambda \in \mathcal{P}(\mathcal{U}) \right\}.
\end{equation}
This property is a standard result in the theory of relaxed controls and differential inclusions, where the set of admissible velocities is extended to its closed convex hull to ensure the convexity of the feasible set \cite{filippov2013differential,vinter2010optimal}. As a result, the feasible set $\Delta$ becomes a relaxation of the original control system and enlarges the search space to the entire convex hull. Consequently, it may include trajectories that are not feasible under the original dynamics, enabling the representation of collective behaviors such as the trajectory splitting illustrated in the following example.

\begin{example}[Ghost Relaxation] \label{ex:chattering}
Consider a 1D system where agents at the same location $x$ move either left ($u=-1$) or right ($u=1$). Our framework captures this "split" through the relaxed control in \eqref{eq:relaxed_pde} (e.g., $\lambda_{t,x} = 0.5\delta_{-1} + 0.5\delta_{1}$), yielding an effective velocity $F(t,x)=0 \in \operatorname{co}\{-1, 1\}$. This $F=0$ represents a \emph{ghost relaxation}: a solution that is mathematically feasible in the convexified space $\Delta$ but is not directly available in the original dynamics. However, such ghost velocities can be realized as the limit of classical trajectories that rapidly switch (chatter) between the admissible control actions~\cite{vinter2010optimal}.
\end{example}

Despite the expansion of the feasible set to $\Delta$, the potential relaxation gap between~\eqref{mainObj} and the original $N$-agent problem~\eqref{eq:NagentMF} is addressed as follows.

\begin{enumerate}
\item The formulation~\eqref{mainObj} can be interpreted as a population-level relaxation of the $N$-agent system in the large-population regime. Rather than optimizing over $N$ coupled trajectories, we optimize over measure pairs satisfying ensemble dynamics. This viewpoint is consistent with classical mean-field limit theory for interacting particle systems~\cite{carmona2018probabilistic,lacker2017limit}.

\item Although $\operatorname{co} f(x,\mathcal U)$ allows relaxed velocities such as in Example~\ref{ex:chattering}, Lemma~\ref{lem:approximation} together with~\cite[Theorem~2.7.2]{vinter2010optimal} ensures that any relaxed trajectory can be uniformly approximated by classical admissible trajectories in $\Gamma$. This guarantees that the linear minimization oracle in the FW method can be implemented over $\Gamma$ while attaining the same oracle value as the relaxed subproblem in~\eqref{mainObj}; see Theorem~\ref{thm:lmo}.

\item Each FW iteration generates a classical admissible trajectory, and every iterate $\mu_k$ is a finite convex combination of occupation measures induced by such trajectories. Hence all iterates remain within the class of admissible classical dynamics. Under the assumptions of Theorem~\ref{thm:fw-rate}, $J(\mu_k,\nu_k)\to J^*=\min_{\Delta}J$ at rate $\mathcal O(1/k)$, yielding a sequence of finite mixtures of admissible trajectories whose objective values approach the relaxed optimum. Related randomized relaxation and relaxation-gap analyses for large-scale aggregative nonconvex problems can be found in~\cite{bonnans2023large}.
\end{enumerate}
{Accordingly, OM-MFC is interpreted here as a population-level convex relaxation that preserves the population-level dynamical structure of the original problem and remains closely connected to classical MFC at the PDE level.}

\section{Properties of the Measure Optimization Problem}\label{sec:properties}
We begin by identifying a natural condition on the interaction kernel under which the OM-MFC objective becomes convex. Specifically, we call a symmetric kernel $W:\mathbb{R}^{n_x}\to\mathbb{R}$ positive semidefinite if
\[
\iint_{\mathbb{R}^{n_x}\times\mathbb{R}^{n_x}} W(x-y)\,d\sigma(x)\,d\sigma(y)\ge 0
\]
for every finite signed measure $\sigma$ for which the integral is well defined.
\begin{Theorem}[Convexity and compactness]\label{thm:conv}
Problem
~\eqref{mainObj} satisfies:
\begin{enumerate}
\item[(a)] When the interaction kernel $W:\mathbb{R}^{n_x}\to\mathbb{R}$ is positive semidefinite, the objective $J(\mu,\nu)$ is convex over $\mathcal{M}_+(\Sigma,T)\times\mathcal{M}_+(\mathcal{X},1)$.
\item[(b)] The constraint set $\Delta$ forms a convex, weak$^*$ compact subset of $\mathcal{M}_+(\Sigma,T)\times\mathcal{M}_+(\mathcal{X},1)$.
\end{enumerate}
\end{Theorem}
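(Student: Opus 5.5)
For part (a), I will split $J$ into its three terms. The running term $\int_\Sigma \ell_0\,d\mu$ and the terminal term $\int_{\mathcal X}\Psi\,d\nu$ are linear in $(\mu,\nu)$, so they are convex. The only nonlinear piece is the interaction term in the rigorous form \eqref{eq:interactionRig}. Write it as
\[
Q(\mu):=\lambda\int_0^T\iint W(x-y)\,d\mu_t(x,u)\,d\mu_t(y,v)\,dt ,
\]
using the disintegration $d\mu=dt\,\mu_t$. This disintegration exists for every $\mu\in\mathcal M_+(\Sigma,T)$ whose time marginal is Lebesgue measure on $[0,T]$. That is automatic on $\Delta$: testing \eqref{eq:feasibleSet} with $v=v(t)$ gives $\int \partial_t v\,d\mu=v(T)-v(0)$. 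On the larger cone I would either state the claim for measures with Lebesgue time marginal, or read $\delta(t-t')$ as the same-time disintegration.

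$Q$ is a quadratic form $Q(\mu)=B(\mu,\mu)$ with a symmetric bilinear form $B$, where symmetry comes from $W$ being symmetric. For $\mu^0,\mu^1$ and $\theta\in[0,1]$, set $\sigma=\mu^1-\mu^0$. The standard identity is
\[
\theta Q(\mu^1)+(1-\theta)Q(\mu^0)-Q(\theta\mu^1+(1-\theta)\mu^0)=\theta(1-\theta)\,Q(\sigma).
\]
For a.e. $t$, the inner double integral of $Q(\sigma)$ equals $\iint W(x-y)\,d\sigma^x_t(x)\,d\sigma^x_t(y)$, where $\sigma^x_t=\pi_x\#\sigma_t$. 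This holds because $W$ depends only on $x-y$, so the $u$-variables integrate out. By positive semidefiniteness, each such inner integral is $\ge0$, so $Q(\sigma)\ge0$, taking $\lambda\ge0$ as implicit for a cost weight. Hence $Q$ is convex, and so is $J$. The only technical care needed is that every integral is well defined. I will assume $W$ is bounded and measurable on the compact set $\mathcal X-\mathcal X$, so Fubini applies.

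For part (b), convexity is immediate. Constraint \eqref{eq:feasibleSet} is affine in $(\mu,\nu)$ for fixed $\rho_0$, and it holds for each test function $v$. The sets $\mathcal M_+(\Sigma,T)$ and $\mathcal M_+(\mathcal X,1)$ are convex, so $\Delta$ is an intersection of convex sets.

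For compactness, both ambient sets are weak$^*$ compact by Banach--Alaoglu plus Riesz representation. $\Sigma$ and $\mathcal X$ are compact, so $C(\Sigma)^*=\mathcal M(\Sigma)$. Nonnegative measures of fixed mass $T$ (resp. $1$) form a closed subset of a norm ball, because positivity and mass are defined by testing against continuous functions (including the constant $1$). It remains to show $\Delta$ is weak$^*$ closed. Fix $v\in C^1([0,T]\times\mathcal X)$. The maps $(\mu,\nu)\mapsto\int v(T,\cdot)\,d\nu$ and $(\mu,\nu)\mapsto\int(\partial_t v+\nabla_x v\cdot f)\,d\mu$ are weak$^*$ continuous, since their integrands are continuous on the compact sets $\mathcal X$ and $\Sigma$ (using continuity of $f$). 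So each constraint defines a weak$^*$ closed set, and $\Delta$, their intersection, is closed. A closed subset of a compact set is compact. If one worries about nets versus sequences, separability of $C(\Sigma)$ makes the weak$^*$ topology metrizable on bounded sets.

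I expect the main obstacle to be the rigorous handling of the interaction term in (a): making sense of $\delta(t-t')$ and checking that the signed difference $\sigma$ admits a time disintegration. I would resolve this by restricting to measures with Lebesgue time marginal, which contains $\Delta$ as shown above. Then $\sigma_t=\mu^1_t-\mu^0_t$ is a finite signed measure for a.e. $t$, and positive semidefiniteness can be applied pointwise in $t$.
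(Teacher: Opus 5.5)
Your proof is correct and follows essentially the same route as the paper. For (a), the paper uses the same quadratic-form identity, with the convexity defect $-\alpha(1-\alpha)\iint W\,\delta\,d(\mu_1-\mu_2)\,d(\mu_1-\mu_2)\le 0$ controlled by positive semidefiniteness; for (b), it combines linearity of the Liouville constraint with weak$^*$ compactness of the ambient sets and weak$^*$ continuity of the constraint functionals. Your extra care fills in details the paper leaves implicit (restricting to measures with Lebesgue time marginal, which is automatic on $\Delta$; integrating out the control variables; and stating $\lambda\ge 0$ and boundedness of $W$), without changing the argument.
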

\begin{proof}
The running cost and terminal cost in $J(\mu,\nu)$ are manifestly linear in $(\mu,\nu)$. Thus convexity hinges on the interaction term
\begin{align*}
&F(\alpha\mu_1+(1-\alpha)\mu_2) - (\alpha F(\mu_1)+(1-\alpha)F(\mu_2)) \\
&\quad= -\alpha(1-\alpha)\iint W\,\delta\,d(\mu_1-\mu_2)\,d(\mu_1-\mu_2) \leq 0,
\end{align*}
where the final inequality leverages the positive semidefiniteness of $W$. This establishes convexity of $F$, hence of $J$, proving (a).

For (b), the product space $\mathcal{M}_+(\Sigma,T)\times\mathcal{M}_+(\mathcal{X},1)$ is convex, while the Liouville constraint~\eqref{eq:feasibleSet} remains linear in $(\mu,\nu)$. Convexity of $\Delta$ follows immediately. weak$^*$ compactness arises since $\mathcal{M}_+(\Sigma,T)$ and $\mathcal{M}_+(\mathcal{X},1)$ are themselves weak$^*$ compact, and the defining Liouville functional is weak$^*$ continuous.
\end{proof}

Typical examples of positive semidefinite interaction kernels include Gaussian kernels such as \(W(r)=\exp\!\{-r^{2}/(2\sigma^{2})\}\),
which are also used in our numerical experiments. More generally, translation-invariant kernels with nonnegative Fourier transforms are positive semidefinite. Theorem~\ref{thm:conv} then implies the existence result stated in Corollary~\ref{Cor:exist}.
\begin{Corollary}[Solution Existence]\label{Cor:exist}
Under the assumptions of Theorem~\ref{thm:conv}, problem~\eqref{mainObj} admits at least one optimal solution 
$(\mu^*,\nu^*) \in \Delta$.
\end{Corollary}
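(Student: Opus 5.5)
The plan is the direct method of the calculus of variations, i.e.\ a Weierstrass-type argument: a lower semicontinuous functional on a nonempty compact set attains its minimum. Theorem~\ref{thm:conv}(b) already gives that $\Delta$ is weak$^*$ compact. Since $C(\Sigma)$ and $C(\mathcal X)$ are separable, the weak$^*$ topology on bounded sets is metrizable, so sequences suffice. The remaining tasks are nonemptiness of $\Delta$ and weak$^*$ lower semicontinuity of $J$ on $\Delta$.

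\emph{Nonemptiness.} Fix any constant control $\bar u\in\mathcal U$. For each $x_0\in\operatorname{supp}\rho_0$, the Lipschitz assumption on $f$ gives a unique solution of $\dot x=f(x,\bar u)$ with $x(0)=x_0$. I will assume, as implicit in taking $\mathcal X$ compact, that this solution stays in $\mathcal X$ and that the flow $x_0\mapsto x(\cdot;x_0)$ is measurable. Then set $\mu:=\int \mu[\omega_{x_0}]\,d\rho_0(x_0)$ and $\nu:=\int\delta_{x(T;x_0)}\,d\rho_0(x_0)$. The Liouville identity~\eqref{eq:feasibleSet} follows by integrating the single-trajectory identity from the proof of Theorem~\ref{Thm:objJN} against $\rho_0$. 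By Fubini, the masses are $T$ and $1$.

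\emph{Lower semicontinuity.} The running term $\int\ell_0\,d\mu$ and the terminal term $\int\Psi\,d\nu$ are weak$^*$ continuous because $\ell_0\in C(\Sigma)$ and $\Psi\in C(\mathcal X)$. The interaction term is the main obstacle, since the $\delta(t-t')$ coupling is not a continuous function on $\Sigma\times\Sigma$.

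I would first show that every feasible $\mu$ has time marginal equal to Lebesgue measure on $[0,T]$. To see this, take $v(t,x)=g(t)$ in~\eqref{eq:feasibleSet}. This gives $\int g'\,d\mu=g(T)-g(0)=\int_0^T g'(t)\,dt$, and it determines the time marginal because it holds for all $g'\in C([0,T])$. Hence $\mu$ disintegrates as $d\mu=dt\,\mu_t$. Writing $\rho_t=\pi_x\#\mu_t$, the interaction term is
\[
I(\mu)=\lambda\int_0^T\iint W(x-y)\,d\rho_t(x)\,d\rho_t(y)\,dt .
\]

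Next I use that $W$ is continuous, as for the Gaussian kernel, so it is uniformly continuous and bounded on $\mathcal X-\mathcal X$. I will prove continuity of $I$ along sequences in $\Delta$ as follows. Take $\mu_k\rightharpoonup^*\mu$ in $\Delta$. For a test function $h(t)$, the measures $\int h(t)\,\delta_{t}(dt')\,\mu_k(dt,dx)\,\mu_k(dt',dy)$ may fail to converge. To avoid this, I write $W(x-y)=\sum_j a_j\phi_j(x)\overline{\phi_j(y)}$ with $a_j\ge0$, which is available by Bochner's theorem given the PSD assumption. Then
\[
I(\mu)=\lambda\sum_j a_j\int_0^T\Bigl|\int\phi_j\,d\rho_t\Bigr|^2dt .
\]
The map $\mu\mapsto\bigl(t\mapsto\int\phi_j\,d\rho_t\bigr)$ is continuous from weak$^*$ to the weak topology of $L^2(0,T)$. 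Indeed, pairing it with $g\in C[0,T]$ gives $\int g(t)\phi_j(x)\,d\mu$, and density extends this to all of $L^2$. Since the squared $L^2$ norm is weakly lower semicontinuous, each summand is weak$^*$ lsc. A sum of nonnegative lsc terms is lsc (Fatou), so $I$ is lsc for $\lambda\ge0$.

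If $\lambda<0$ or $W$ is not PSD, this argument does not apply. In that case I would instead need equicontinuity of $t\mapsto\rho_t$, which follows from the bound $|f|\le C$ on the compact set via the continuity equation~\eqref{eq:relaxed_pde}. That bound upgrades the convergence to uniform-in-$t$ narrow convergence and gives full continuity of $I$.

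\emph{Conclusion.} Take a minimizing sequence in $\Delta$, which is nonempty and weak$^*$ compact, and extract a weak$^*$ convergent subsequence. The limit lies in $\Delta$ by closedness, and lower semicontinuity of $J$ shows that it attains $\inf_\Delta J$. This gives $(\mu^*,\nu^*)$.
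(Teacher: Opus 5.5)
Your proposal is correct and follows the same direct-method route as the paper: a minimizing sequence, weak$^*$ compactness of $\Delta$ from Theorem~\ref{thm:conv}(b), weak$^*$ continuity of the linear terms, and weak$^*$ lower semicontinuity of the interaction term from continuity and positive semidefiniteness of $W$. It also fills in what the paper only asserts: nonemptiness of $\Delta$, metrizability to justify extracting subsequences, and an actual proof of the lower semicontinuity via the time disintegration $d\mu=dt\,\mu_t$ and the weak-$L^2$ argument. One minor correction: Bochner's theorem gives an integral $W(z)=\int e^{i\theta\cdot z}\,d\hat m(\theta)$ against a finite nonnegative spectral measure $\hat m$, not a discrete sum (a discrete expansion would come from Mercer's theorem on the compact set $\mathcal X$), but your argument goes through unchanged with Fatou's lemma applied to the $\hat m$-integral.
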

\begin{proof}
    Let $J^* := \inf_{(\mu,\nu)\in\Delta} J(\mu,\nu)$.
Then there exists a minimizing sequence $\{(\mu_n,\nu_n)\}\subset\Delta$ such that
$J(\mu_n,\nu_n)\to J^*$. By Theorem~\ref{thm:conv}(b), $\Delta$ is weak$^*$ compact, hence (up to a subsequence) $(\mu_n,\nu_n)\rightharpoonup^*(\mu^*,\nu^*)\in\Delta$.

Since $\ell_0$ and $\Psi$ are bounded continuous, the linear terms
$\int_\Sigma \ell_0\,d\mu$ and $\int_{\mathcal X}\Psi\,d\nu$
are weak$^*$ continuous.
Moreover, the interaction functional is weak$^*$ lower semicontinuous on $\Delta$
because $W$ is continuous and positive semidefinite.
Therefore $J$ is weak$^*$ lower semicontinuous on $\Delta$, and thus
\[
J(\mu^*,\nu^*) \le \liminf_{n\to\infty} J(\mu_n,\nu_n)=J^*.
\]
Since $(\mu^*,\nu^*)\in\Delta$, we conclude $J(\mu^*,\nu^*)=J^*$.
\end{proof}

\begin{Theorem}[G\^ateaux Differentiability]\label{thm:firstVariation}
Let $(\mu,\nu)\in\Delta$. Consider signed measures $\delta\mu\in\mathcal{M}(\Sigma)$ and $\delta\nu\in\mathcal{M}(\mathcal{X})$ satisfying total mass preservation $\delta\mu(\Sigma)=\delta\nu(\mathcal{X})=0$ and the linearized Liouville equation
\begin{equation}\label{eq:linLiouville}
\int_\Sigma(\partial_tv+\nabla_xv\cdot f)\,d\delta\mu(t,x,u)=\int_{\mathcal{X}}v(T,x)\,d\delta\nu(x),
\end{equation}
for all $v\in C^1([0,T]\times\mathcal{X})$.
Then the G\^ateaux derivative of $J$ at $(\mu,\nu)$ in direction $(\delta\mu,\delta\nu)$ exists:
\begin{align}\label{eq:firstVariation}
\delta J(\mu,\nu;\delta\mu,\delta\nu)&:=\left.\frac{d}{d\varepsilon}\,J(\mu+\varepsilon\delta\mu,\nu+\varepsilon\delta\nu)
\right|_{\varepsilon=0} \nonumber\\
&=:\langle g_{\mu},\delta\mu\rangle + \langle \Psi,\delta\nu\rangle,
\end{align}
where
$\langle g_{\mu},\delta\mu\rangle:=\int_\Sigma g_{\mu}(t,x,u)\,d\delta\mu(t,x,u)$ and
$\langle \Psi,\delta\nu\rangle:=\int_{\mathcal X}\Psi(x)\,d\delta\nu(x)$, with
\begin{equation}\label{eq:gmu}
g_{\mu}(t,x,u)=\ell_0(t,x,u)+2\lambda\!\!\int
W(x-y)\,\delta(t-t')\,d\mu(t',y,u').
\end{equation}
\end{Theorem}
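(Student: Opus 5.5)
The plan is to expand $J(\mu+\varepsilon\delta\mu,\nu+\varepsilon\delta\nu)$ directly as a polynomial in $\varepsilon$. The running and terminal terms are linear, and the interaction term is a symmetric bilinear form. So the derivative at $\varepsilon=0$ can be read off from the linear coefficient. I will work throughout with the rigorous disintegrated form \eqref{eq:interactionRig}, not the formal $\delta(t-t')$ notation.

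\textbf{Step 1: setup.} Note first that $\mu+\varepsilon\delta\mu$ need not be nonnegative, so $J$ is evaluated on signed measures. The running and terminal terms extend linearly to signed measures without difficulty.

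For the interaction term, $\delta\mu$ must have a disintegration in time compatible with $\mu$. I will argue as follows. Take $v(t,x)=\phi(t)$ with $\phi(T)=0$ in \eqref{eq:linLiouville}. This gives $\int\phi'(t)\,d\delta\mu=0$ for all such $\phi$, which forces the time marginal of $\delta\mu$ to vanish. Since $\mu\in\mathcal M_+(\Sigma,T)$ satisfies \eqref{eq:feasibleSet}, the analogous test (now with the $\nu$ term included) shows that the time marginal of $\mu$ is Lebesgue measure on $[0,T]$. Hence both measures disintegrate as $d\mu=dt\,\mu_t$ and $d\delta\mu=dt\,\delta\mu_t$, with $\delta\mu_t$ signed measures on $\mathcal X\times\mathcal U$. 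If $\delta\mu$ has a singular time part, I will simply restrict to directions of the form $\delta\mu=\mu'-\mu$ with $\mu'$ feasible; this is all the Frank--Wolfe method needs. I will flag this as a mild technical caveat.

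\textbf{Step 2: expansion.} Define the bilinear form
\[
B(\sigma,\tau):=\int_0^T\!\!\iint W(x-y)\,d\sigma_t(x,u)\,d\tau_t(y,v)\,dt.
\]
Then
\[
J(\mu+\varepsilon\delta\mu,\nu+\varepsilon\delta\nu)=J(\mu,\nu)+\varepsilon\big(\langle\ell_0,\delta\mu\rangle+\langle\Psi,\delta\nu\rangle+\lambda B(\mu,\delta\mu)+\lambda B(\delta\mu,\mu)\big)+\varepsilon^2\lambda B(\delta\mu,\delta\mu).
\]
Because $W$ is symmetric, $W(x-y)=W(y-x)$, so $B(\delta\mu,\mu)=B(\mu,\delta\mu)$. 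Fubini then gives
\[
2\lambda B(\mu,\delta\mu)=\int_\Sigma\Big(2\lambda\int W(x-y)\,d\mu_t(y,u')\Big)d\delta\mu(t,x,u).
\]
This is exactly the interaction part of $g_\mu$ in \eqref{eq:gmu}.

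\textbf{Step 3: finiteness and conclusion.} I will assume, as in Corollary~\ref{Cor:exist}, that $W$ is continuous on the compact set $\mathcal X-\mathcal X$, so it is bounded there. With $\ell_0$ and $\Psi$ bounded, every term above is finite, and $|B(\delta\mu,\delta\mu)|\le\|W\|_\infty\int_0^T\|\delta\mu_t\|^2\,dt$. Finiteness of this quantity again requires the disintegration from Step~1 with $\|\delta\mu_t\|$ square-integrable in $t$. This holds automatically for $\delta\mu=\mu'-\mu$, since then $\|\delta\mu_t\|\le 2$. Once the quadratic coefficient is finite, the difference quotient converges to the linear coefficient as $\varepsilon\to0$, which yields \eqref{eq:firstVariation}.

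Mass preservation and \eqref{eq:linLiouville} play no role in the algebra. They only guarantee that $(\delta\mu,\delta\nu)$ is a tangent direction of $\Delta$, and they supply the time-marginal argument in Step~1.

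\textbf{Main obstacle.} The main difficulty is Step~1 together with the finiteness of $B(\delta\mu,\delta\mu)$: making the same-time product rigorous for a general signed direction. I will handle it by the time-marginal argument, falling back to feasible-difference directions if that argument does not cover the general case.
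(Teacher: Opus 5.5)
Your proposal is correct and follows the paper's own argument: you expand $J(\mu+\varepsilon\delta\mu,\nu+\varepsilon\delta\nu)$ as a quadratic in $\varepsilon$, use the symmetry of $W$ to get the factor $2$ in $g_\mu$, and let the $\varepsilon\lambda B(\delta\mu,\delta\mu)$ term drop out; the paper simply assumes this quadratic coefficient is finite, so your disintegration and boundedness discussion adds rigor rather than changing the route. One correction: a vanishing time marginal of the signed measure $\delta\mu$ does not by itself give $d\delta\mu=dt\,\delta\mu_t$, since its positive and negative parts can carry matching time-singular pieces (for example, a point mass in time whose $u$-components cancel in both mass and velocity still satisfies \eqref{eq:linLiouville}), so it is your fallback to feasible differences $\delta\mu=\mu'-\mu$, where $\|\delta\mu_t\|\le 2$, that actually carries the argument, and these are exactly the directions used later in Theorem~\ref{thm:optcond} and the Frank--Wolfe analysis.
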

\begin{proof}
The running and terminal costs in $J(\mu,\nu)$ contribute the linear terms $\langle \ell_0,\delta\mu\rangle$ and $\langle \Psi,\delta\nu\rangle$.

For the interaction functional 
\[
F(\mu):=\iint W(x-y)\,\delta(t-t')\,d\mu(t,x,u)\,d\mu(t',y,u'),
\]
direct expansion yields
\begin{align*}
&\lim_{\varepsilon\to 0}\frac{F(\mu+\varepsilon\delta\mu)-F(\mu)}{\varepsilon} \\
&\quad= 2 \iint W\,\delta\,d\mu\,d\delta\mu 
  + \lim_{\varepsilon\to 0}\varepsilon \iint W\,\delta\,d\delta\mu\,d\delta\mu \\
&\quad= 2 \iint W\,\delta\,d\mu\,d\delta\mu.
\end{align*}
The full directional derivative~\eqref{eq:firstVariation} follows by superposition.
\end{proof}

\begin{Theorem}[Optimality Condition]\label{thm:optcond}
Let $J$ be convex on $\Delta$, and let $(\mu^*,\nu^*)\in\Delta$.
Then $(\mu^*,\nu^*)$ is optimal for~\eqref{mainObj} if and only if
\begin{equation}\label{eq:VI}
\langle g_{\mu^*},\mu-\mu^*\rangle+\langle \Psi,\nu-\nu^*\rangle \ge 0,
\qquad \forall (\mu,\nu)\in\Delta,
\end{equation}
where $g_{\mu^*}$ is defined in~\eqref{eq:gmu}.
\end{Theorem}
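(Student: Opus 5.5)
The plan is the standard first-order characterization of minimizers of a convex, Gâteaux-differentiable functional over a convex set, specialized through Theorem~\ref{thm:firstVariation}. The first step is to check that for any $(\mu,\nu)\in\Delta$ the direction $(\delta\mu,\delta\nu):=(\mu-\mu^*,\nu-\nu^*)$ is admissible in Theorem~\ref{thm:firstVariation}. Mass preservation holds because both $\mu,\mu^*$ have mass $T$ and both $\nu,\nu^*$ are probability measures. Subtracting the two weak Liouville identities \eqref{eq:feasibleSet} makes the $\rho_0$ terms cancel, which gives exactly the linearized relation \eqref{eq:linLiouville}. By convexity of $\Delta$ (Theorem~\ref{thm:conv}(b)), the segment $(\mu^*,\nu^*)+\varepsilon(\delta\mu,\delta\nu)$ stays in $\Delta$ for $\varepsilon\in[0,1]$. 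Hence $\delta J(\mu^*,\nu^*;\mu-\mu^*,\nu-\nu^*)=\langle g_{\mu^*},\mu-\mu^*\rangle+\langle\Psi,\nu-\nu^*\rangle$.

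\emph{Necessity.} Suppose $(\mu^*,\nu^*)$ is optimal. Set $\phi(\varepsilon):=J(\mu^*+\varepsilon\delta\mu,\nu^*+\varepsilon\delta\nu)$ for $\varepsilon\in[0,1]$. Then $\phi(\varepsilon)\ge\phi(0)$, so $(\phi(\varepsilon)-\phi(0))/\varepsilon\ge 0$. Letting $\varepsilon\downarrow0$ and using the derivative from Theorem~\ref{thm:firstVariation} (a one-sided limit suffices), we get \eqref{eq:VI}. This direction does not need convexity of $J$.

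\emph{Sufficiency.} Assume \eqref{eq:VI} holds. Since $J$ is convex on $\Delta$, $\phi$ is convex on $[0,1]$, so $\phi(1)-\phi(0)\ge\phi'(0^+)$. Here $\phi'(0^+)$ is the Gâteaux derivative, which is $\ge0$ by \eqref{eq:VI}. Thus $J(\mu,\nu)\ge J(\mu^*,\nu^*)$ for every $(\mu,\nu)\in\Delta$.

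An alternative route avoids convexity of $\phi$ and uses the explicit structure of $J$. Writing $F$ for the interaction functional, we have
\[
J(\mu,\nu)-J(\mu^*,\nu^*)=\langle g_{\mu^*},\delta\mu\rangle+\langle\Psi,\delta\nu\rangle+\lambda\iint W\,\delta\,d\delta\mu\,d\delta\mu .
\]
The last term is nonnegative under the positive-semidefiniteness hypothesis (interpreted slicewise in $t$ via \eqref{eq:interactionRig}), exactly as in the proof of Theorem~\ref{thm:conv}(a). I would present this expansion as a remark, since it makes the argument fully explicit.

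The main obstacle is rigor in the differentiation step. The expansion of $F(\mu^*+\varepsilon\delta\mu)$ is an exact quadratic in $\varepsilon$, but it requires the same-time pairing $\iint W\,\delta\,d\mu\,d\delta\mu$ to be well defined and finite. That in turn requires the time-disintegrations $\mu_t$, which exist because each feasible $\mu$ has time marginal equal to Lebesgue measure on $[0,T]$. This follows by testing \eqref{eq:feasibleSet} with $v=v(t)$. I would state this disintegration explicitly and invoke the finiteness/continuity of $W$ on the compact set $\mathcal X$, so that all integrals are finite. The quadratic then has a well-defined derivative at $0$, and both directions follow.
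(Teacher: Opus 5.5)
Your proof is correct. The sufficiency direction matches the paper's: both use the inequality $J(\mu,\nu)-J(\mu^*,\nu^*)\ge\delta J(\mu^*,\nu^*;\mu-\mu^*,\nu-\nu^*)$, which follows from convexity of $J$ along the feasible segment, and then evaluate $\delta J$ via Theorem~\ref{thm:firstVariation}.

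Where you differ is necessity. The paper derives that direction from the same convexity inequality. But that inequality only says $J(\mu,\nu)-J(\mu^*,\nu^*)$ bounds the directional derivative from above. Combined with $J(\mu,\nu)-J(\mu^*,\nu^*)\ge 0$, it gives no information about the sign of $\delta J$. Your argument is the one that actually closes this direction: the one-sided difference quotient $\bigl(\phi(\varepsilon)-\phi(0)\bigr)/\varepsilon\ge 0$ as $\varepsilon\downarrow 0$, along a segment that stays in $\Delta$ because $\Delta$ is convex. As you note, it also shows that convexity of $J$ is needed only for sufficiency.

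Your explicit check that $(\mu-\mu^*,\nu-\nu^*)$ has zero mass and satisfies \eqref{eq:linLiouville} is also worth keeping. The $\rho_0$ terms cancel, and this check is required to invoke Theorem~\ref{thm:firstVariation}, though the paper leaves it implicit.

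One small point about your closing remark. The theorem assumes convexity of $J$ on $\Delta$, not positive semidefiniteness of $W$, and not $\lambda\ge 0$. You can stay within the stated hypothesis: $\phi$ is an exact quadratic in $\varepsilon$ with leading coefficient $\lambda\iint W\,\delta\,d\delta\mu\,d\delta\mu$, so convexity of $\phi$ on $[0,1]$ is equivalent to that coefficient being nonnegative.
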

\begin{proof}
For any $(\mu,\nu)\in\Delta$,
 by convexity of $J$ on $\Delta$ 
\[
J(\mu,\nu)-J(\mu^*,\nu^*) \ge
\delta J(\mu^*,\nu^*;\mu-\mu^*,\nu-\nu^*).
\]
Using Theorem~\ref{thm:firstVariation} gives \eqref{eq:VI}.
Conversely, if \eqref{eq:VI} holds, then the above inequality implies
$J(\mu,\nu)\ge J(\mu^*,\nu^*)$ for all $(\mu,\nu)\in\Delta$, which proves optimality.
\end{proof}

These results establish problem~\eqref{mainObj} as a well-defined optimization problem in infinite-dimensional measure space, with convex structure under the assumptions of Theorem~\ref{thm:conv}. Additionally, the linearity of the first variation of $J$ with respect to $(\delta\mu,\delta\nu)$ provides the first-order information needed to form linearized approximations. The following section develops FW methods tailored to this measure-space setting.

\section{Frank–Wolfe Methods for Optimization over Measures}\label{sec:fw_methods}
In this section, we apply FW methods to solve problem~\eqref{mainObj}. The FW method (also known as the \emph{conditional gradient} method \cite{2015bub,jaggi2013revisiting}), is particularly well-suited for optimization over measures \cite{yu2025deterministic}, since it avoids 
projections onto the infinite-dimensional feasible set $\Delta$ by replacing 
them with a sequence of linear minimization oracles (LMO). 
This projection-free structure makes FW especially attractive in the present 
setting. In addition to the basic FW scheme, we also consider the 
fully-corrective variant, which periodically re-optimizes the weights over 
the set of previously generated occupation measures associated with 
admissible trajectories \cite{yu2025deterministic,yu2025frank}.

To motivate the extension to measure spaces, recall the classical finite-dimensional FW iteration minimizes smooth \(f:\mathbb{R}^d\to\mathbb{R}\) over compact convex \(Z\subset\mathbb{R}^d\) through
\begin{equation} \label{eq:FW}
\begin{split}
    y_{k+1} &= (1 - \alpha_k)y_k + \alpha_k s_k, \\
    s_k &:= \argmin_{s \in Z} \nabla f(y_k)^\top s,
\end{split}
\end{equation}
with step size \(\alpha_k\in(0,1]\). The method remains projection-free and maintains feasibility of all iterates \(\{y_k\}\); its effectiveness hinges on the tractability of the linear minimization subproblem.

To adapt FW to problem~\eqref{mainObj}, we employ the first variation $\delta J$ as the linearization tool. For iterate $(\mu_k,\nu_k)\in\Delta$,
\[
J(\mu,\nu) \approx J(\mu_k,\nu_k) + \delta J(\mu_k,\nu_k;\,\mu-\mu_k,\nu-\nu_k).
\]
This linearization motivates the measure-space FW recursion:
\begin{align}\label{eq:FWrec}
&(\mu_{k+1},\nu_{k+1}) = (1-\alpha_k)(\mu_k,\nu_k) + \alpha_k (\tilde{\mu}_k,\tilde{\nu}_k), \nonumber\\
&(\tilde{\mu}_k,\tilde{\nu}_k) \in \argmin_{(\tilde{\mu},\tilde{\nu})\in \Delta} \, \langle g_{\mu_k},\tilde{\mu}-\mu_k\rangle + \langle \Psi,\tilde{\nu}-\nu_k\rangle,
\end{align}
with $g_{\mu_k}$ from~\eqref{eq:gmu}. This update ensures feasibility at each step and circumvents projections onto the infinite-dimensional set $\Delta$, rendering FW methods particularly suitable here. Since the subproblem consists of linear minimization over the convex set $\Delta$, Theorem~\ref{thm:lmo} shows that an optimal solution can be constructed by selecting classical admissible trajectories and aggregating their occupation measures under the initial distribution $\rho_0$. The proof is provided in Appendix~\ref{sec:appendix_lmo_proof}.

\begin{Theorem}[Solution to FW Subproblem]\label{thm:lmo}
Fix $(\mu_k,\nu_k)\in\Delta$, where $\Delta$ is defined with initial distribution $\rho_0$.
Assume that for $\rho_0$-almost every $\xi\in\mathcal X$, the parametric problem
\begin{equation}\label{eq:equivFWSub}
V(\xi):=\inf_{\substack{\dot x=f(x,u)\\ x(0)=\xi}}
\ \int_0^T g_{\mu_k}(t,x(t),u(t))\,dt + \Psi(x(T))
\end{equation}
admits an optimal solution. Furthermore, assume there exists a $\rho_0$-measurable selection map $\xi \mapsto \omega_k(\xi)$ such that, for $\rho_0$-a.e. $\xi$, the trajectory $\omega_k(\xi) = (x_k(\cdot, \xi), u_k(\cdot, \xi)) \in \Gamma$ attains the minimum $V(\xi)$, where $\Gamma$ is the space of admissible trajectories defined in \eqref{eq:trajSpace}.

Define the aggregated measures:
\[
\bar\mu_k := \int_{\mathcal X} \mu[\omega_k(\xi)]\,d\rho_0(\xi),
\qquad
\bar\nu_k := \int_{\mathcal X} \nu[\omega_k(\xi)]\,d\rho_0(\xi).
\]
Then $(\bar\mu_k,\bar\nu_k)\in\Delta$ attains the minimum of the
FW subproblem:
\[
(\bar\mu_k,\bar\nu_k)\in
\argmin_{(\mu,\nu)\in\Delta}
\big( \langle g_{\mu_k},\mu\rangle+\langle\Psi,\nu\rangle \big).
\]
\end{Theorem}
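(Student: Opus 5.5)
The proof splits into two parts: feasibility of $(\bar\mu_k,\bar\nu_k)$, and a lower bound $\langle g_{\mu_k},\mu\rangle+\langle\Psi,\nu\rangle\ge\int V\,d\rho_0$ for every $(\mu,\nu)\in\Delta$. Since the aggregated pair attains exactly $\int V\,d\rho_0$, these two facts together give optimality. Note that $g_{\mu_k}$ is fixed during the subproblem, so the objective is linear and only the structure of $\Delta$ matters.

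\textbf{Feasibility and value.} Measurability of $\xi\mapsto\omega_k(\xi)$ makes $\xi\mapsto\int\varphi\,d\mu[\omega_k(\xi)]$ measurable for each $\varphi\in C(\Sigma)$, so $\bar\mu_k$ and $\bar\nu_k$ are well defined as integrals of measures. Their masses are $T$ and $1$. For each $\xi$, the argument in the proof of Theorem~\ref{Thm:objJN} applies to the single trajectory $\omega_k(\xi)$. It gives the Liouville identity with $\delta_\xi$ as initial measure:
\[
v(T,x_k(T,\xi))-v(0,\xi)=\int\big(\partial_tv+\nabla_xv\cdot f\big)\,d\mu[\omega_k(\xi)].
\]
Integrating in $\rho_0$ and using Fubini gives \eqref{eq:feasibleSet}, so $(\bar\mu_k,\bar\nu_k)\in\Delta$. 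The same Fubini argument gives
\[
\langle g_{\mu_k},\bar\mu_k\rangle+\langle\Psi,\bar\nu_k\rangle=\int_{\mathcal X}V\,d\rho_0 .
\]
Here boundedness of $g_{\mu_k}$ is needed. It follows from $\ell_0$ bounded and $W$ bounded continuous on the compact set $\mathcal X-\mathcal X$.

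\textbf{Lower bound: the main obstacle.} I would prove the lower bound by weak duality with the value function; this is the step I expect to be hardest. Take any $v\in C^1([0,T]\times\mathcal X)$ that is a smooth subsolution of the HJB inequality, namely
\[
\partial_tv+\nabla_xv\cdot f(x,u)+g_{\mu_k}(t,x,u)\ge0 \ \text{on }\Sigma,\qquad v(T,\cdot)\le\Psi .
\]
Testing \eqref{eq:feasibleSet} with such a $v$ gives
\[
\langle g_{\mu_k},\mu\rangle+\langle\Psi,\nu\rangle\ge \int(\partial_tv+\nabla v\cdot f+g_{\mu_k})\,d\mu+\int v(T,\cdot)\,d\nu-\int(\partial_t v+\nabla v\cdot f)\,d\mu\ge\int v(0,\cdot)\,d\rho_0 .
\]
The difficulty is that the true value function need not be $C^1$. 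I would therefore rely on the known absence of a relaxation gap for linear occupation-measure programs (Lasserre et al., Vinter). The supremum of $\int v(0,\cdot)\,d\rho_0$ over smooth subsolutions equals $\int V^{\mathrm{rel}}\,d\rho_0$, where $V^{\mathrm{rel}}$ is the value of the relaxed (convexified-velocity) problem. This is obtained by approximating the viscosity value function with smooth subsolutions, via mollification after slightly enlarging the dynamics; convexity and compactness of $\Delta$ from Theorem~\ref{thm:conv}(b) supply the minimax step.

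\textbf{Closing the gap between relaxed and classical values.} It remains to show $V^{\mathrm{rel}}(\xi)=V(\xi)$. Lemma~\ref{lem:approximation}, together with \cite[Theorem~2.7.2]{vinter2010optimal}, says any relaxed trajectory from $\xi$ is a uniform limit of classical trajectories in $\Gamma$, with running costs converging because $g_{\mu_k}$ is continuous in $(x,u)$ for each $t$ and bounded. Hence $V^{\mathrm{rel}}\ge V$, and the reverse inequality is trivial. Combining the three steps, every $(\mu,\nu)\in\Delta$ has objective at least $\int V\,d\rho_0$, and this value is attained by $(\bar\mu_k,\bar\nu_k)$.

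A fallback if the duality route proves too technical is a superposition (disintegration) argument. It writes any $(\mu,\nu)\in\Delta$ as a $\rho_0$-mixture of relaxed single-trajectory occupation measures, using a Young-measure superposition principle for the continuity equation \eqref{eq:relaxed_pde}. The lower bound then follows pointwise from $V^{\mathrm{rel}}=V$.
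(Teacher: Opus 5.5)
Your feasibility-and-value step is correct. It supplies a verification the paper's proof only asserts: the single-trajectory Liouville identity integrated against $\rho_0$ shows $(\bar\mu_k,\bar\nu_k)\in\Delta$ and $\langle g_{\mu_k},\bar\mu_k\rangle+\langle\Psi,\bar\nu_k\rangle=\int V\,d\rho_0$. Your closing step, $V^{\mathrm{rel}}=V$ via Lemma~\ref{lem:approximation}, is also the paper's. The difference lies in the lower bound, and there the paper's proof is exactly your ``fallback'':
\begin{itemize}
\item it disintegrates an arbitrary feasible pair (Lemma~\ref{lem:disintegration});
\item it applies the superposition principle to the continuity equation with the averaged field $F$, obtaining a path measure $\eta$ with $(e_0)_\#\eta=\rho_0$ and $\langle g_{\mu_k},\mu\rangle+\langle\Psi,\nu\rangle=\int C(\gamma)\,d\eta(\gamma)$ (Lemma~\ref{lem:superposition});
\item it integrates the pointwise bound $C(\gamma)\ge V(\gamma(0))$ of Corollary~\ref{cor:fw_lower_bound}.
\end{itemize}
This needs no regularity of $V$, and it handles an arbitrary $\rho_0$ automatically.

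Your primary duality route has a gap at its load-bearing step. Weak duality is fine. What you then need is the single inequality $\sup_v\int v(0,\cdot)\,d\rho_0\ge\int V^{\mathrm{rel}}\,d\rho_0$ over $C^1$ subsolutions, and your sketch does not deliver it.
\begin{itemize}
\item The minimax step using compactness of $\Delta$ only identifies the LP value with the dual value. Combined with the equality you want, that restates the theorem rather than proving it.
\item Everything therefore rests on constructing one $C^1$ subsolution that nearly matches $V^{\mathrm{rel}}$ for $\rho_0$-a.e.\ $\xi$ simultaneously. The mollification argument you allude to does this when $V^{\mathrm{rel}}$ is uniformly continuous on a neighbourhood of $[0,T]\times\mathcal X$. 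Here, however, the state constraint $x(t)\in\mathcal X$ is built into both $\Gamma$ and $\Delta$. State-constrained value functions are in general only lower semicontinuous unless an inward-pointing condition holds.
\item The no-gap theorems you cite (Vinter; Lasserre et al.) are stated for a fixed initial point, and a per-point statement does not integrate. The supremum over $v$ sits outside the integral against $\rho_0$, so you would need one $v$ that is good for all initial points at once.
\item The versions that do cover general initial measures are typically proved by decomposing Liouville-feasible measures into relaxed trajectories, which is the superposition step again.
\end{itemize}
So the superposition argument is not optional; it is what actually closes your proof. The duality layer adds a certificate interpretation: any smooth subsolution gives a computable lower bound on the oracle value, which connects to moment--SOS relaxations. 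It does not give an independent proof under the paper's hypotheses.
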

While Corollary~\ref{Cor:exist} guarantees the existence of an optimal solution over $\Delta$, the minimum in~\eqref{eq:equivFWSub} over classical trajectories may fail to be attained in certain non-compact settings. In such cases, $\omega_k$ is understood as an $\varepsilon$-optimal solution of the subproblem. The convergence guarantees of FW methods remain valid under such inexact linear minimization oracles; see~\cite{yu2025deterministic}.

Crucially, this construction ensures that the algorithm never selects the ``ghost relaxations'' illustrated in Example~\ref{ex:chattering}. Because the LMO is restricted to the occupation measures induced by classical trajectories in $\Gamma$, the resulting FW iterates $\{\mu_k\}$ remain finite convex combinations of admissible dynamics satisfying $\dot{x}(t) = f(x(t), u(t))$. Consequently, as highlighted in Section~\ref{subsec:discussion}, our framework provides a sequence of finite mixtures of admissible trajectories whose costs approach the relaxed optimum $J^*$. These strategies are composed of discrete groups of agents following classical paths, thereby avoiding the non-physical averaged velocities inherent in the continuous modeling relaxation.

By Theorem~\ref{thm:lmo}, the FW subproblem on $\Delta$ reduces to solving a family of deterministic optimal control problems (OCPs) indexed by the initial states $\xi \sim \rho_0$. This reduction allows us to solve the infinite-dimensional measure optimization through a sequence of classical OCPs, as summarized in Algorithm~\ref{alg:FW}.

\begin{algorithm}
\caption{FW over Occupation Measures}
\label{alg:FW}
\begin{algorithmic}[1]
\Require Initial measures $(\mu_0,\nu_0)\in\Delta$ (consistent with $\rho_0$), step sizes $\{\alpha_k\}_{k\ge 0}$.
\For{$k=0,1,2,\dots, K$}
    \State Solve the parametric linearized subproblem: for $\rho_0$-a.e.\ $\xi\in\mathcal X$,
    \vspace{-8pt}
    \[
      \omega_k(\xi) \in \argmin_{\substack{x(0)=\xi\\ \dot x=f(x,u)}}
      \left\{ \int_0^T g_{\mu_k}(t,x(t),u(t))\,dt + \Psi(x(T)) \right\}.
    \]
    \vspace{-8pt}
    \State Construct the aggregated occupation measures
    \vspace{-8pt}
    \[
      \bar\mu_k = \int_{\mathcal X}\mu[\omega_k(\xi)]\,d\rho_0(\xi),\qquad
      \bar\nu_k = \int_{\mathcal X}\nu[\omega_k(\xi)]\,d\rho_0(\xi).
    \]
    \vspace{-8pt}
    \State
    $(\mu_{k+1},\nu_{k+1})
      \gets
      (1-\alpha_k)(\mu_k,\nu_k)
      + \alpha_k(\bar\mu_k,\bar\nu_k).$
\EndFor
\State \Return $(\mu_K,\nu_K)$.
\end{algorithmic}
\end{algorithm}
In practice, the spatial aggregation in Step~3 is implemented according to the physical structure of the initial distribution $\rho_0$. For a discrete distribution supported on $M$ locations $\{\xi^{(m)}\}_{m=1}^M$ with associated probabilities $\{\pi_m\}_{m=1}^M$, the aggregation $\bar{\mu}_k = \sum_{m=1}^M \pi_m \mu[\omega_k(\xi^{(m)})]$ is exact. For continuous distributions, the integral is approximated via Monte Carlo sampling, where $M$ samples drawn from $\rho_0$ yield the empirical approximation $\bar{\mu}_k \approx \frac{1}{M} \sum_{m=1}^M \mu[\omega_k(\xi^{(m)})]$. In both cases, the $M$ subproblems are mutually independent and can be solved in parallel, which is critical for maintaining computational efficiency in large-scale multi-agent settings.

One important remark is that although the optimal control problem in Step~2 of Algorithm~\ref{alg:FW} depends on the current iterate $\mu_k$, it remains tractable. Since $\mu_k$ is a convex combination of previously generated aggregated occupation measures, it can be written as $\mu_k=\sum_{i=0}^k \beta_i^{(k)} \bar\mu_i$ for some coefficients $\beta_i^{(k)}\ge 0$ satisfying $\sum_{i=0}^k \beta_i^{(k)}=1$. Consequently, the linearized cost $g_{\mu_k}$ admits a closed-form expression. For a given initial state $\xi$, the subproblem reduces to
\begin{align} \label{eq:LMO_final}
 &\min_{ \substack{x(0)=\xi, \\ \dot x=f(x,u), \\ u \in \mathcal{U}} } \quad \Psi(x(T)) + 
  \int_0^T \bigg( \ell_0(t,x,u) \nonumber\\
 & + 2\lambda \sum_{i=0}^k \beta_i^{(k)} \int_{\mathcal{X}} W(x(t) - x_i(t, \zeta)) \, d\rho_0(\zeta) \bigg) dt .
\end{align}
where $x_i(\cdot, \zeta)$ denotes the state trajectory component of $\omega_i(\zeta)$ generated at the $i$-th iteration. In practice, the spatial integral in \eqref{eq:LMO_final} is evaluated based on the structure of $\rho_0$. For a discrete or sampled distribution with $M$ initial states $\{\xi^{(m)}\}_{m=1}^M$ and weights $\{\pi_m\}_{m=1}^M$, the interaction term in \eqref{eq:LMO_final} simplifies to a double summation over the previously generated trajectory ensembles and the $M$ sample paths: $\sum_{i=0}^k \beta_i^{(k)} \sum_{m=1}^M \pi_m W(x(t) - x_i(t, \xi^{(m)}))$. This formulation yields a standard optimal control problem, for which a broad range of mature solution methods and solvers are available. In our numerical experiments, a gradient-based method is employed to illustrate that the FW subproblem can be addressed effectively in practice. Specifically, we utilize the adjoint method to compute gradients and perform updates on the control sequence $u$ using the Adam optimizer.


\subsection{Complexity of Frank--Wolfe over Occupation Measures}\label{subsec:fw-rate}
We next establish the convergence rate of Algorithm~\ref{alg:FW}. The result
extends the classical FW complexity bound for convex smooth
objectives on compact convex sets to the present optimization problem posed
over occupation measures.
\begin{Definition}[$L$-smoothness]\label{def:Lsmooth}
We say that $J$ is $L$-smooth on $\Delta$ with respect to the
product total variation norm if for all
$(\mu,\nu),(\mu',\nu')\in\Delta$,
\begin{align*}
J(\mu',\nu')
&\le
J(\mu,\nu)
+
\delta J(\mu,\nu;\mu'-\mu,\nu'-\nu)
\\
&\qquad
+
\frac{L}{2}
\|(\mu'-\mu,\nu'-\nu)\|^2 .
\end{align*}
\end{Definition}

\begin{Theorem}[FW Complexity]\label{thm:fw-rate}
Assume that $J$ is convex and $L$-smooth on $\Delta$.
Let $\{(\mu_k,\nu_k)\}$ be generated by Algorithm~\ref{alg:FW} with step sizes
$\alpha_k = \frac{2}{k+2}$.
Then for all $k\ge 1$,
\[
J(\mu_k,\nu_k) - J^*
\le \frac{8L(T+1)^2}{k+2},
\]
where $J^* := \min_{(\mu,\nu)\in\Delta} J(\mu,\nu)$.
\end{Theorem}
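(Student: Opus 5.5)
We follow the classical Frank--Wolfe argument of \cite{jaggi2013revisiting}, transplanted to the measure setting. There are three steps: a diameter bound for $\Delta$, a one-step descent inequality from $L$-smoothness and the LMO, and an induction on the gap.

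\textbf{Step 1 (diameter bound).} We bound the diameter of $\Delta$ in the product total variation norm. For $(\mu,\nu),(\mu',\nu')\in\Delta$, both $\mu$ and $\mu'$ are nonnegative with mass $T$, and both $\nu$ and $\nu'$ are probability measures. Hence
\[
\|\mu'-\mu\|_{TV}\le 2T,\qquad \|\nu'-\nu\|_{TV}\le 2.
\]
Taking the product norm to be the sum of the two TV norms gives $D:=\operatorname{diam}(\Delta)\le 2(T+1)$, so $D^2\le 4(T+1)^2$. This constant is the source of the factor $(T+1)^2$ in the statement.

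\textbf{Step 2 (one-step descent).} Set $h_k:=J(\mu_k,\nu_k)-J^*$. Apply Definition~\ref{def:Lsmooth} with $(\mu',\nu')=(\mu_{k+1},\nu_{k+1})$, using $(\mu_{k+1}-\mu_k,\nu_{k+1}-\nu_k)=\alpha_k(\bar\mu_k-\mu_k,\bar\nu_k-\nu_k)$. Since the G\^ateaux derivative of Theorem~\ref{thm:firstVariation} is linear in the direction, this gives
\[
J(\mu_{k+1},\nu_{k+1})\le J(\mu_k,\nu_k)+\alpha_k\,\delta J(\mu_k,\nu_k;\bar\mu_k-\mu_k,\bar\nu_k-\nu_k)+\tfrac{L}{2}\alpha_k^2D^2.
\]
The difference of two elements of $\Delta$ satisfies the mass-preservation and linearized Liouville conditions of Theorem~\ref{thm:firstVariation}, so the derivative formula applies to this direction. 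By Theorem~\ref{thm:lmo}, $(\bar\mu_k,\bar\nu_k)$ minimizes the linearized objective over $\Delta$. Therefore the middle term is at most $\delta J(\mu_k,\nu_k;\mu^*-\mu_k,\nu^*-\nu_k)$, where $(\mu^*,\nu^*)$ is an optimal solution, which exists by Corollary~\ref{Cor:exist}. Convexity of $J$, in the form of the first-order inequality used in the proof of Theorem~\ref{thm:optcond}, bounds this by $-h_k$. Combining,
\[
h_{k+1}\le(1-\alpha_k)h_k+\tfrac{L}{2}\alpha_k^2D^2\le(1-\alpha_k)h_k+2L(T+1)^2\alpha_k^2 .
\]

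\textbf{Step 3 (induction).} Let $C:=2L(T+1)^2$ and $\alpha_k=2/(k+2)$. We prove $h_k\le 4C/(k+2)$ for $k\ge1$, which equals the claimed bound $8L(T+1)^2/(k+2)$.
\begin{itemize}
\item \emph{Base case.} Since $\alpha_0=1$, the recursion gives $h_1\le C\le 4C/3$.
\item \emph{Inductive step.} If $h_k\le 4C/(k+2)$, then
\[
h_{k+1}\le \frac{k}{k+2}\cdot\frac{4C}{k+2}+\frac{4C}{(k+2)^2}=\frac{4C(k+1)}{(k+2)^2}\le\frac{4C}{k+3},
\]
where the last step uses $(k+1)(k+3)\le(k+2)^2$.
\end{itemize}

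\textbf{Main obstacle.} The delicate point is Step 2's use of the G\^ateaux derivative for the interaction term with the formal $\delta(t-t')$. We need $\delta J$ to be well defined and linear along directions $\bar\mu_k-\mu_k$ that lie in $\Delta-\Delta$. This should follow from the disintegration $d\mu=dt\,\mu_t$ and the rigorous interpretation \eqref{eq:interactionRig}, which we would invoke explicitly. The smoothness constant itself is assumed in the statement, so no bound on $W$ is needed.
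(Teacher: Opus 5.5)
Your proposal is correct and is essentially the paper's proof. It uses the same bound $\|s_k-z_k\|\le 2(T+1)$ and the same chain of $L$-smoothness, LMO optimality of $(\bar\mu_k,\bar\nu_k)$ and convexity, giving $h_{k+1}\le(1-\alpha_k)h_k+2L(T+1)^2\alpha_k^2$, followed by the induction with $\alpha_k=2/(k+2)$. Your explicit base case $h_1\le C\le 4C/3$, your inductive step (which the paper only calls ``standard''), and your check that directions in $\Delta-\Delta$ satisfy the hypotheses of Theorem~\ref{thm:firstVariation} are all correct; they fill in details rather than take a different route.
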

Here $T$ denotes the time horizon of the optimal control problem.
Since the occupation measure satisfies $\|\mu\| = T$ and the terminal measure satisfies $\|\nu\| = 1$, this yields the factor $T+1$ in the bound. The proof follows the standard FW descent argument adapted to the measure setting and is provided in Appendix~\ref{app:fw-proof}.

By Theorem~\ref{thm:conv}, the positive semidefiniteness of $W$ ensures that the objective $J$ is convex, and Theorem~\ref{thm:fw-rate} therefore yields the standard $\mathcal{O}(1/k)$ convergence rate. If $W$ is not positive semidefinite, the objective $J$ may become nonconvex. In this case, FW methods still guarantee convergence to a stationary point characterized by the variational inequality~\eqref{eq:VI}; see~\cite{jaggi2013revisiting,yu2025deterministic}.

\subsection{Fully-Corrective Frank--Wolfe}\label{subsec:fcfw}
A natural extension of Algorithm~\ref{alg:FW} is the
FCFW variant, which often yields faster practical convergence \cite{yu2025derivative}. Instead of forming a simple convex combination between the current iterate and the aggregated occupation--measure pair induced by the newly generated trajectory ensemble, the FCFW variant re-optimizes the weights over all previously generated aggregated occupation measures associated with admissible trajectories. At iteration $k$, the fully-corrective update replaces Step~4 of
Algorithm~\ref{alg:FW} by solving
\begin{equation}\label{eq:fcfw}
\min_{\alpha \in \mathbb{R}^{k+1}}
J\!\left(\sum_{i=0}^k \alpha_i (\bar{\mu}_i,\bar{\nu}_i)\right)
\quad
\text{s.t. } \alpha \ge 0, \ \mathbf{1}^\top \alpha = 1.
\end{equation}
Due to the bilinear structure of the interaction term in
\eqref{mainObj}, problem \eqref{eq:fcfw} reduces to a quadratic program
(QP)
\begin{equation}\label{eq:fcfw-qp}
\min_{\alpha \ge 0,\, \mathbf{1}^\top \alpha = 1}
\frac{1}{2}\alpha^\top H \alpha + c^\top \alpha .
\end{equation} The linear cost vector $c \in \mathbb{R}^{k+1}$ has components
\begin{equation*}
 c_i := \int_{\mathcal{X}} \bigg( \int_0^T \ell_0(t, x_i, u_i) \, dt 
 + \Psi(x_i(T)) \bigg) d\rho_0
\end{equation*}
while the interaction matrix $H \in \mathbb{R}^{(k+1)\times(k+1)}$ is
\begin{equation*}
H_{ij}
:=
2\lambda
\int_0^T
\iint_{\mathcal X^2}
W(x_i(t,\zeta)-x_j(t,\eta))
\, d\rho_0(\zeta)\, d\rho_0(\eta)
\, dt .
\end{equation*} If the interaction kernel $W$ is positive semidefinite, the matrix $H$
is likewise positive semidefinite, and the problem
\eqref{eq:fcfw-qp} is a convex QP. Under the assumptions of
Theorem~\ref{thm:fw-rate}, the fully-corrective variant admits the same
$\mathcal{O}(1/k)$ convergence rate as Algorithm~\ref{alg:FW}. The resulting fully-corrective procedure is summarized in Algorithm~\ref{alg:FCFW}.

\begin{algorithm}
\caption{FCFW over Occupation Measures}
\label{alg:FCFW}
\begin{algorithmic}[1]
\Require Initial measures $(\mu_0,\nu_0)\in\Delta$ (consistent with $\rho_0$). Set $(\bar\mu_0,\bar\nu_0):=(\mu_0,\nu_0)$
\For{$k=0,1,2,\dots,K-1$}
    \State Solve the parametric linearized subproblem: for $\rho_0$-a.e.\ $\xi\in\mathcal X$,
    \vspace{-8pt}
    \[
      \omega_k(\xi)\in
      \argmin_{\substack{x(0)=\xi\\ \dot x=f(x,u)}}
      \left\{
      \int_0^T g_{\mu_k}(t,x(t),u(t))\,dt+\Psi(x(T))
      \right\}.
    \]
    \vspace{-8pt}
    \State Construct the aggregated occupation measures
    \vspace{-8pt}
    \[
      \bar\mu_{k+1} = \int_{\mathcal X}\mu[\omega_k(\xi)]\,d\rho_0(\xi),\quad
      \bar\nu_{k+1} = \int_{\mathcal X}\nu[\omega_k(\xi)]\,d\rho_0(\xi).
    \]
    \vspace{-8pt}
    \State Solve the fully-corrective problem
    \vspace{-8pt}
    \[
      \alpha^{(k+1)}\in
      \argmin_{\alpha\ge0,\ \mathbf 1^\top\alpha=1}
      J\!\left(\sum_{i=0}^{k+1}\alpha_i(\bar\mu_i,\bar\nu_i)\right).
    \]
    \vspace{-8pt}
    \State Update
    $
      (\mu_{k+1},\nu_{k+1})
      \gets
      \sum_{i=0}^{k+1}\alpha_i^{(k+1)}(\bar\mu_i,\bar\nu_i).
    $
\EndFor
\State \Return $(\mu_K,\nu_K)$.
\end{algorithmic}
\end{algorithm}

In our numerical implementation, the vector $c$ and matrix $H$ are
constructed using the particle representation of $\rho_0$ described
earlier, with $M$ representative samples $\{\xi^{(m)}\}_{m=1}^M$. Forming $H$ involves $O(k^2 M^2)$ pairwise interactions in total, but the matrix can be updated incrementally by computing only the interactions between the new trajectory ensemble $\omega_k$ and the previous ones $\{\omega_0,\dots,\omega_{k-1}\}$. Since the dimension of the resulting QP \eqref{eq:fcfw-qp} is only $k+1$, solving it is computationally inexpensive compared with the optimal control subproblem in Step~2. In the numerical experiments we therefore adopt the fully-corrective variant to improve practical convergence speed.

\section{Applications and numerical results}\label{simulation}
This section presents numerical studies of the proposed OM-MFC and FCFW framework on two representative applications. The first considers UAV swarm coordination with obstacle avoidance, and the second considers satellite trajectory planning under three-dimensional Keplerian dynamics. Together, these examples illustrate the flexibility and scalability of the proposed method across different dynamical models and application settings.
\subsection{UAV Swarm Coordination and Obstacle Avoidance}

We consider a swarm of UAVs operating in environments with static obstacles. The objective is to steer the swarm from an initial spatial distribution toward a target configuration while minimizing a cost that penalizes control effort, discourages high-density clustering, and enforces convergence to the desired terminal region. This setting illustrates how the proposed occupation-measure framework handles state constraints and population-level coupling in large multi-agent systems.

The numerical experiments are organized into three representative settings that highlight different aspects of the proposed method.

\noindent\textbf{(i) 2D swarm with a single initial location.}
In this case, a baseline solution can be obtained from a Hamilton--Jacobi--Bellman and Fokker--Planck (HJB--FP) PDE formulation. The trajectories and distributions produced by the FW method closely match the PDE solution, confirming the consistency of the proposed framework.
\noindent\textbf{(ii) 2D swarm with multiple initial locations.}
We next consider a scenario where the swarm originates from several distinct starting locations. This setting represents practical situations in which UAVs are deployed from multiple bases. A PDE solution can still be computed in this two-dimensional case, and the swarm evolution obtained from the FW method remains qualitatively consistent with the PDE reference while demonstrating that the proposed framework naturally accommodates distributed initial conditions.
\noindent\textbf{(iii) 3D swarm with multiple obstacles.}
Finally, we extend the problem to a three-dimensional environment with multiple obstacles. The FW algorithm generates smooth, collision-free trajectories while guiding the swarm toward the target region, illustrating the scalability of the proposed framework in higher-dimensional environments.

\subsubsection{Problem setup}

We apply the proposed framework to a UAV swarm tasked with moving from an initial spatial distribution toward a target region centered at \(x_g\) over a finite horizon \(T\). The agent dynamics are modeled as single integrators:
\[
\dot x(t)=u(t).
\]
The corresponding occupation-measure objective takes the form
\begin{align}\label{eq:sim_measure}
J &= \int \Big(\frac{\alpha}{2}\|u\|^2 + V_{\mathrm{obs}}(x)\Big)\,d\mu\nonumber\\
&+ \frac{\lambda_\Psi}{2}\int_{\mathcal{X}}\|x-x_g\|^2\,d\nu(x)\\
& + \frac{\gamma}{2}\iint
W(\|x-y\|)\,\delta(t-t')\,d\mu(t,x,u)\,d\mu(t',y,v)\nonumber,
\end{align}
The interaction potential $W$ induces coupling across the population, so the objective cannot be decomposed into independent per-agent terms. The repulsive interaction is modeled by an isotropic Gaussian kernel
\(
W(r) = \exp\!\{-r^{2}/(2\sigma^{2})\}
\),
with width $\sigma$ corresponding to the sensing radius, and $\gamma$ tuning the overall repulsion strength.
Static obstacles are penalized by
\begin{equation}
V_{\mathrm{obs}}(x) = \beta\,\max\!\bigl(0,\, R - \|x - c\|\bigr)^2,
\end{equation}
where $c$ and $R$ are the obstacle center and radius respectively, and $\beta > 0$ is a penalty gain.

\subsubsection{2D Swarm with Repulsion and a Single Obstacle (Effectiveness)}\label{2D single}
We consider a two-dimensional swarm coordination problem in which the agents must maintain separation while navigating around a single circular obstacle of radius $r_{\mathrm{obs}}=0.8$ centered at $(2.5,\,1.5)$, together with a safety margin of $0.2$. Obstacle avoidance is enforced through a quadratic penalty potential with weight $\beta=10^3$, and inter-agent repulsion is introduced via the Gaussian kernel $W$ with parameters $\lambda_W=0.5$ and $\sigma_W=0.25$. The remaining parameters are chosen as follows: horizon $T=3$, control weight $\alpha=0.1$, terminal weight $\lambda_\Psi=30$, target position $x_g=(5,3)$, and a point-mass initial condition at $(0,0)$.

We implement the FCFW algorithm (Algorithm~\ref{alg:FCFW}) with $K=100$ outer iterations. At each iteration, the linear minimization oracle requires solving an optimal control subproblem, which in our setting reduces to a QP with linear dynamics and quadratic cost. This subproblem is solved using the Adam optimizer with $400$ steps and learning rate $0.2$. The resulting trajectory is added to the current dictionary, after which the mixture weights over all accumulated trajectories are reoptimized by solving \eqref{eq:fcfw} via projected gradient descent on the simplex.
\begin{figure}[t]
    \centering
    \includegraphics[width=\linewidth]{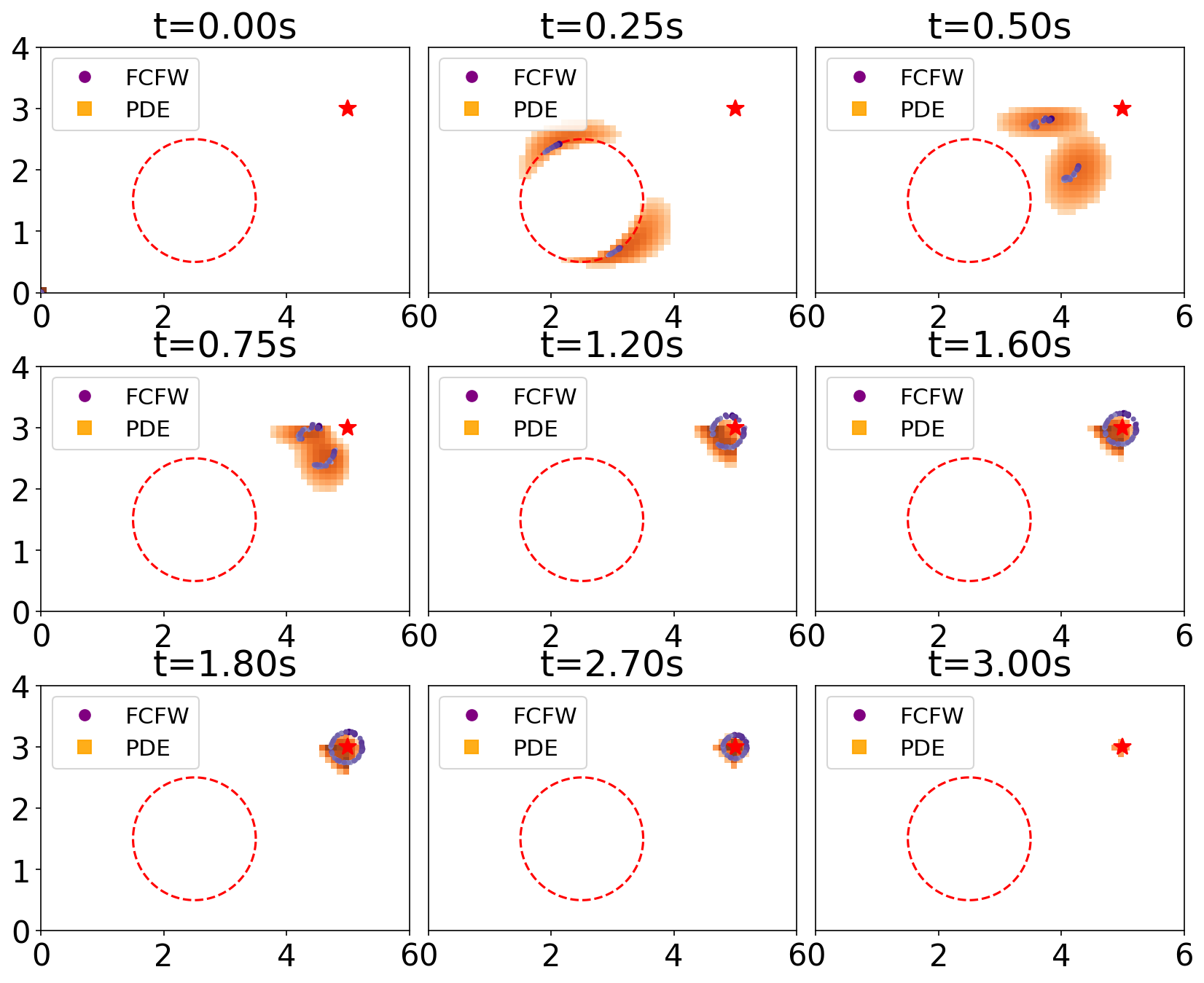}
    \caption{2D swarm with one obstacle (point-mass initial state): FCFW (purple points) vs. PDE (orange heatmap).}
    \label{fig:obstacle_compare_2d}
\end{figure}
\begin{figure}[t]
    \centering
    \includegraphics[width=1.0\linewidth]{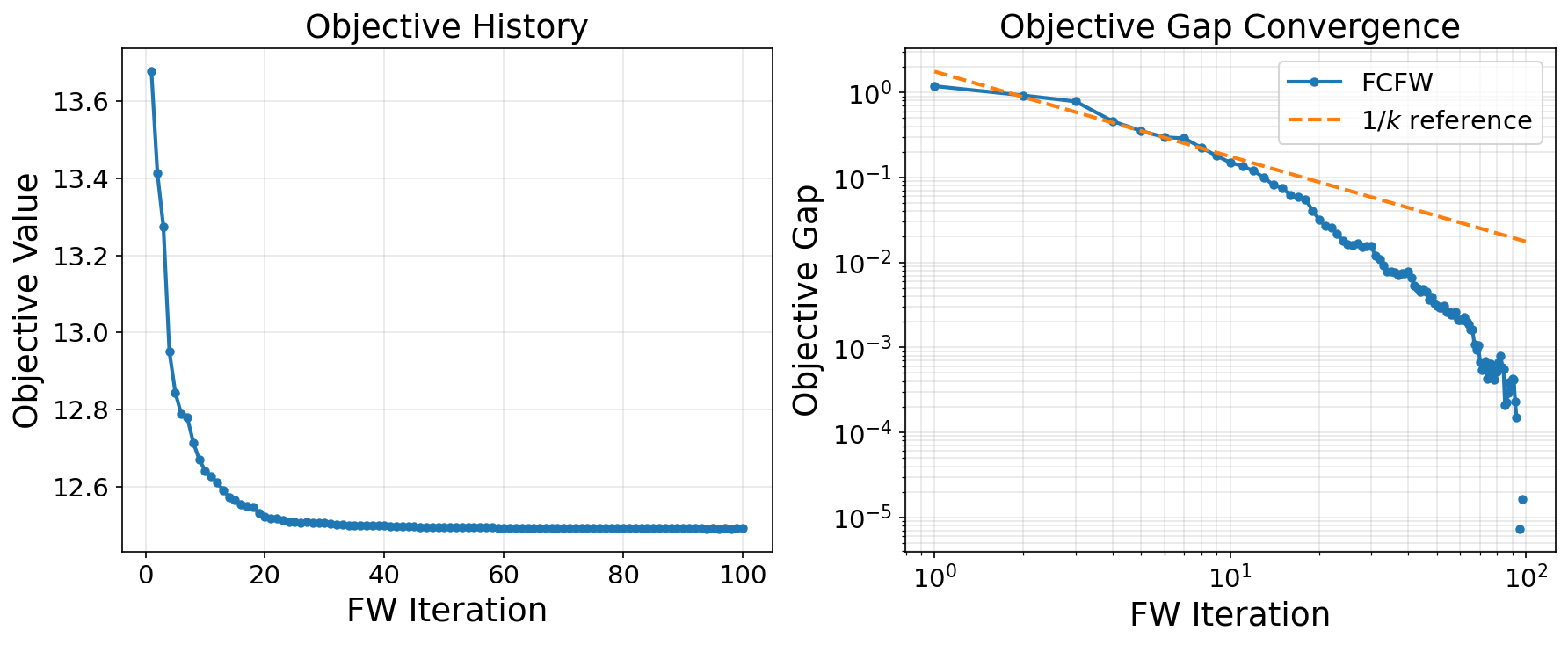}
    \caption{Objective and objective-gap convergence (point-mass initial state).}
    \label{fig:obj_converge}
\end{figure}

Figure~\ref{fig:obstacle_compare_2d} compares the FCFW solution with the PDE benchmark in the presence of a repulsive potential and a single circular obstacle. As discussed in Section~\ref{relation}, the OM-MFC formulation is closely related to the classical MFC PDE system at the population level, but the two approaches differ in their modeling structure and computational representation: the PDE benchmark is obtained from grid-based density evolution, whereas the proposed method is computed through trajectory-based optimization in the FCFW framework. For this reason, we do not compare the corresponding objective values directly, and instead use the PDE solution as a continuum reference for the swarm evolution. In this example, both methods exhibit qualitatively consistent behavior: starting from a single point in the lower-left corner, the swarm splits around the obstacle, disperses under the repulsive interaction, and then reconverges toward the target at $(5,3)$ from multiple directions. This agreement indicates that the proposed trajectory-based method captures the same population-level behavior in this low-dimensional setting.
Figure~\ref{fig:obj_converge} shows the objective history under the FCFW algorithm together with the decay of the objective gap on a log--log scale. Here the objective gap is defined as $J_k - J_K$
where $J_k$ denotes the objective value at iteration $k$, and $J_K$ is the final objective value. The objective decreases steadily over the iterations and quickly approaches a stable value. In the right panel, the displayed gap exhibits a sublinear decay broadly consistent with the \(1/k\) reference slope, in agreement with the theoretical convergence behavior of the FW method. These results indicate stable convergence of the proposed algorithm in this obstacle-avoidance setting.

\subsubsection{2D Swarm with Distributed Initial States ((General Initial Distributions))}\label{2D multiple}

A practically important scenario in UAV swarm coordination arises when agents are deployed from multiple geographically separated bases rather than a single launch point. This setting captures realistic operations such as coordinated area coverage, search-and-rescue missions, or joint strike packages, where UAVs are launched from distinct forward operating locations and must coordinate to accomplish a common task. 

From the algorithmic perspective, the OM-MFC framework accommodates this setting naturally
through the initial distribution $\rho_0$. Rather than restricting $\rho_0$ to a point mass
as in Section~\ref{2D single}, we now take $\rho_0$ to be a discrete measure supported on $M$ distinct
initial locations $\{\xi^{(m)}\}_{m=1}^{M}$ with uniform weights $\pi_m = 1/M$. By
Theorem~\ref{thm:lmo}, the FW subproblem at each iteration decomposes into $M$ independent
optimal control problems, one for each starting location $\xi^{(m)}$. The aggregated occupation measure is then formed as the weighted empirical average
\begin{equation}
    \bar{\mu}_k = \frac{1}{M}\sum_{m=1}^{M} \mu[\omega_k(\xi^{(m)})],
\end{equation}
and the fully-corrective weight re-optimization in~\eqref{eq:fcfw-qp} proceeds over the accumulated dictionary of such aggregated measures. This preserves the same algorithmic structure as in the single-source case, while introducing $M$ mutually independent subproblems that can be solved in parallel.

We consider a swarm originating from $M = 10$ distinct initial positions, reflecting a realistic multi-base deployment scenario.
All other parameters are identical to Section~\ref{2D single}: target position $x_g = (5, 3)$,
horizon $T = 3$, a single circular obstacle of radius $r_{\mathrm{obs}} = 0.8$ centered
at $(2.5, 1.5)$ with safety margin $0.2$, obstacle penalty weight $\beta = 10^3$,
inter-agent repulsion parameters $\lambda_W = 0.5$ and $\sigma_W = 0.25$, control weight
$\alpha = 0.1$, and terminal weight $\lambda_\Psi = 30$. The FCFW algorithm is run for
$K = 100$ outer iterations, with the LMO subproblem solved via the Adam optimizer with
$400$ gradient steps and learning rate $0.2$.
\begin{figure}[t]
    \centering
    \includegraphics[width=1.05\linewidth]{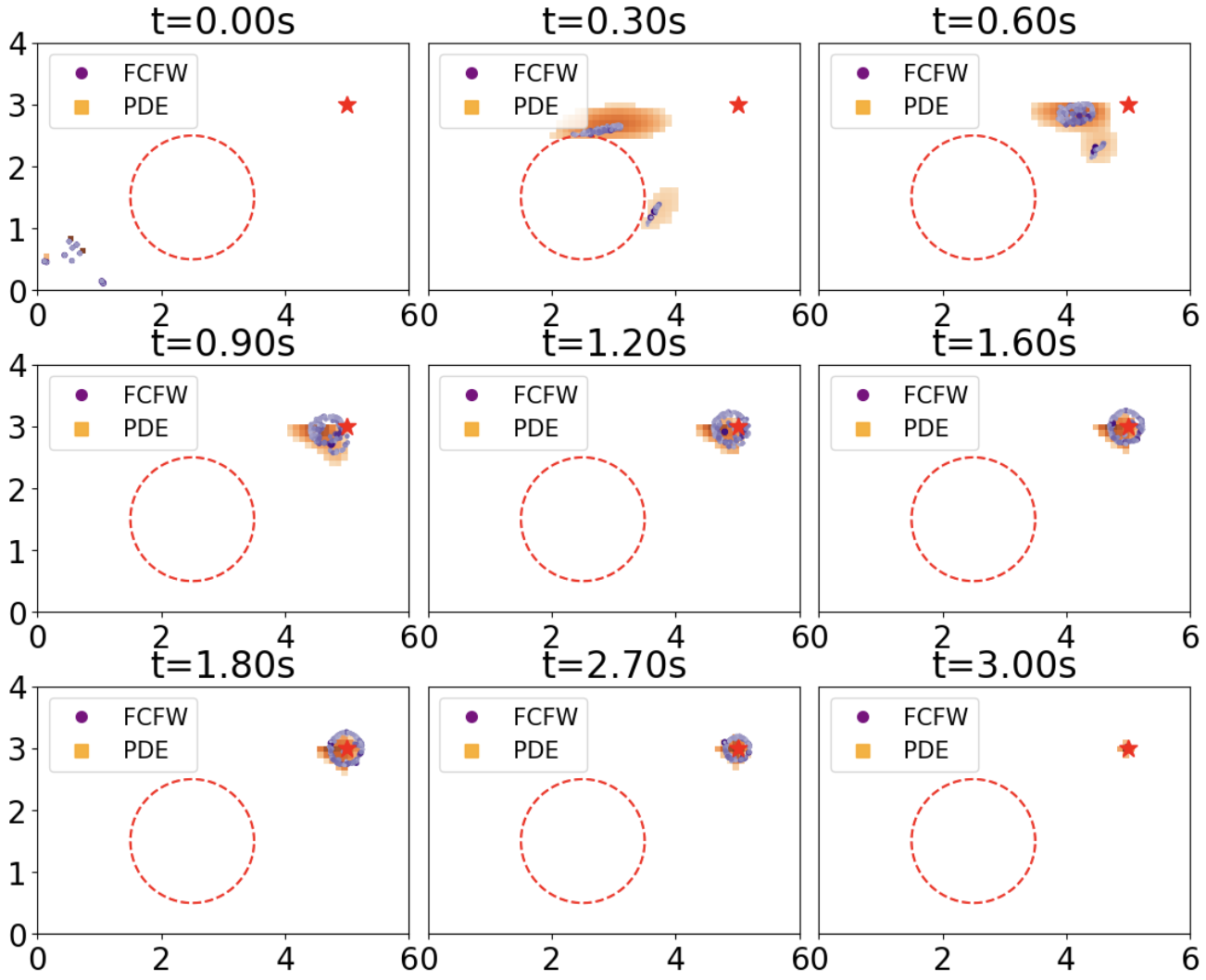}
    \caption{2D swarm with one obstacle (distributed initial states): FCFW (purple points) vs. PDE (orange heatmap).}
    \label{fig:multi_source_snapshots}
\end{figure}
\begin{figure}[t]
    \centering
    \includegraphics[width=1.0\linewidth]{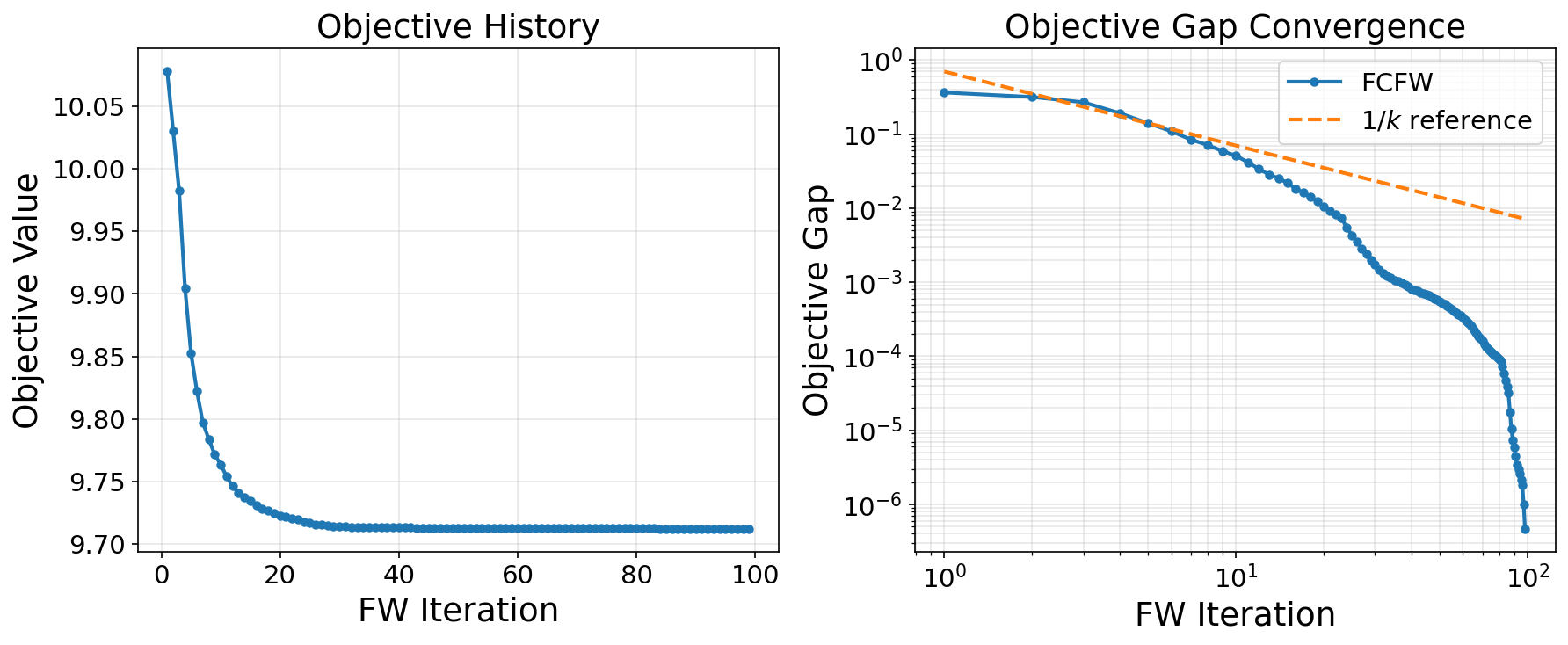}
    \caption{Objective and objective-gap convergence (distributed initial state).}
    \label{fig:multi_source_convergence}
\end{figure}

Figure~\ref{fig:multi_source_snapshots} shows snapshots of the swarm evolution at
representative time instants, comparing the FCFW solution (blue points) against the PDE
benchmark (orange heatmap). Despite the agents starting from spatially dispersed locations,
the FCFW trajectories remain in close qualitative agreement with the PDE density throughout
the entire horizon. Unlike the previous example, in this case a larger portion of the agents passes the obstacle from one side, while a smaller portion passes from the other. The repulsive interaction again promotes dispersion during transit, whereas the terminal constraint drives the population to reconverge near the target region at $t = T$.  As in Section~\ref{2D single}, the comparison is intended as a qualitative benchmark against the classical PDE formulation.
Figure~\ref{fig:multi_source_convergence} reports the objective history and the approximate objective gap decay on a log--log scale. The objective decreases steadily across iterations and stabilizes reliably, indicating that distributed initialization does not adversely affect the convergence behavior. The displayed gap exhibits a sublinear decay broadly consistent with the $\mathcal{O}(1/k)$ reference slope predicted by Theorem~\ref{thm:fw-rate}. These results demonstrate the robustness of the FCFW framework with respect to the initial distribution and confirm that the multi-source decomposition introduced in Section~\ref{sec:fw_methods} produces high-quality solutions without modifying the core algorithm.

The same construction also extends naturally to continuous initial distributions by replacing $\rho_0$ with a Monte Carlo empirical approximation, while preserving the decomposable structure of the subproblem.

\subsubsection{3D Swarm with Multiple Obstacles (Scalability)}

We next consider a three-dimensional swarm coordination problem with ten obstacles and repulsive interactions. Unlike the two-dimensional experiments in Sections~\ref{2D single} and~\ref{2D multiple}, we do not include a PDE benchmark here. In three dimensions, PDE-based approaches typically require substantially larger state-space discretizations, which makes such computations significantly more demanding in our current experimental setting. Our goal in this example is therefore to demonstrate the convergence and obstacle-avoidance behavior of the proposed FW framework in a higher-dimensional setting without relying on large-scale grid discretization. The parameters are chosen as follows: $x_0=(0,0,0),\quad x_g=(5,3,2),\quad T=3,\; N_t=150,\;\alpha=0.1,\; \lambda_\Psi=30,\; \lambda_W=25,\; \beta=10^3, \;K=50$.
\begin{figure}[t] 
\centering 
\includegraphics[width=\linewidth]{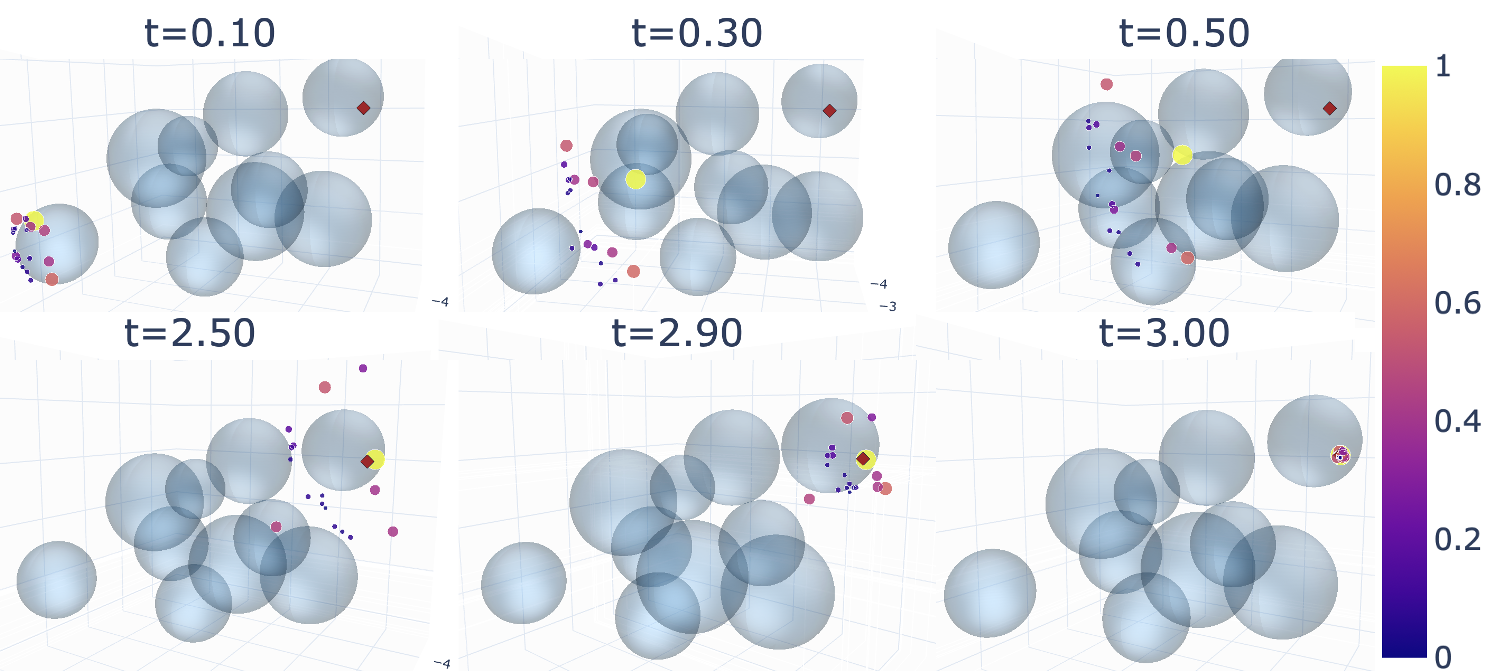} 
\caption{3D swarm with repulsion and 10 obstacle.} 
\label{fig:obstacle_compare_3d} 
\end{figure}
\begin{figure}[t] 
\centering 
\includegraphics[width=\linewidth]{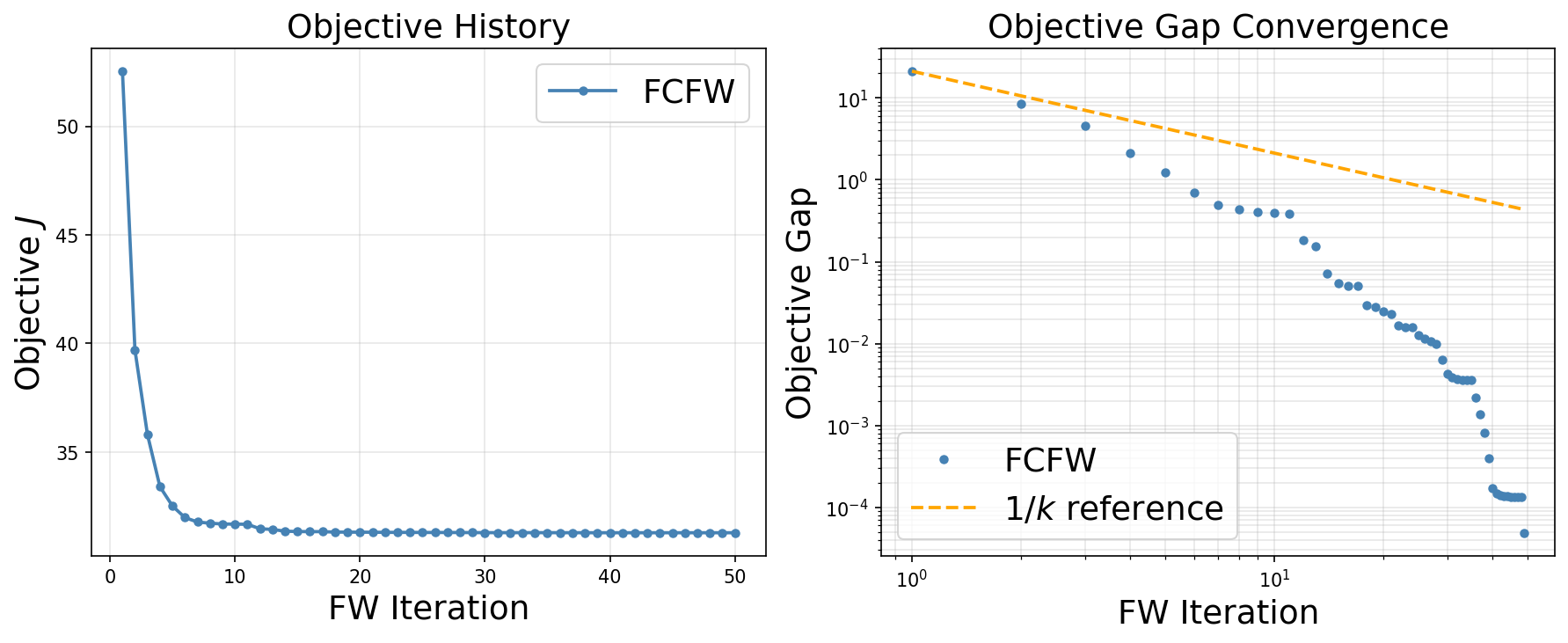} 
\caption{Objective and objective-gap convergence (3D).} 
\label{fig:fw-3d-obj} 
\end{figure}

Figure~\ref{fig:obstacle_compare_3d} shows time slices of the UAV distribution obtained from the FW solution.
The swarm successfully avoids obstacles, maintains repulsive separation, and gradually converges to the target region. 
In addition, Figure~\ref{fig:fw-3d-obj} reports the objective convergence history, confirming that the outer iterations stabilize rapidly. 
The entire 3D instance, including interactions and ten obstacles, is solved in about 20 minutes on a standard workstation, demonstrating the practicality of the proposed approach in higher-dimensional settings where grid-based discretization becomes prohibitively expensive.

\subsection{Satellite Constellation Coordination}
We next demonstrate the proposed framework on a second application: satellite constellation deployment under three-dimensional Keplerian orbital dynamics. Each satellite evolves according to nonlinear Keplerian dynamics in three-dimensional space, with a six-dimensional state consisting of position and velocity, making the problem significantly more complex than the two-dimensional single-integrator model considered previously.

The scenario is motivated by a rideshare deployment, in which multiple satellites are deployed from a common release point on a circular low Earth orbit (LEO) at radius $R_0 = 7000$ km. The satellites must transfer to a higher target orbit at $R_{\mathrm{target}} = 8000$ km within a horizon of $T = 6000$ s, while simultaneously spreading along the target orbit. Fuel consumption is penalized throughout the maneuver. This setting captures the core challenge of constellation deployment, where satellites must coordinate their final positions along the target orbit without explicit assignment---a situation that arises naturally in the commercial deployment of large satellite constellations used for global broadband communications or Earth observation.
\subsubsection{Problem Setup}

Each satellite's dynamics follow Keplerian orbital mechanics with thrust acceleration as control input:
\begin{align}
    \dot{r} = v, \;\;\dot{v} = -\frac{\mu}{\|r\|^3} r + a,
\end{align}
where $r \in \mathbb{R}^3$ and $v \in \mathbb{R}^3$ are the position and velocity, $\mu = 398600$ km$^3$/s$^2$ is the Earth's gravitational parameter, and $a \in \mathbb{R}^3$ is the thrust acceleration subject to the constraint $\|a\| \leq a_{\max} = 10^{-3}$ km/s$^2$.

The objective functional takes the form
\begin{align}
    J(\mu) = &\int_0^T \alpha \|a(t)\|^2 \, dt + \beta_f \bigl(\|r(T)\| - R_{\text{target}}\bigr)^2 \\
    &+ \lambda_W \int_0^T \int W_\sigma(r - r') \, d\mu(r') \, dt,
\end{align}
where the first term penalizes fuel consumption with weight $\alpha = 10^4$, the second term enforces arrival at the target orbit with weight $\beta_f = 10.0$, and the third term is a pairwise Gaussian interaction kernel $W_\sigma(r) = \exp\!\left(-\frac{\|r\|^2}{2\sigma^2}\right)$ with bandwidth $\sigma = 50$ km, weighted by $\lambda_W = 1000$, that promotes spatial dispersion among satellites along the target orbit.

\subsubsection{Results and Discussion}

We apply the FW algorithm over occupation measures to the constellation coordination problem, running $K = 100$ iterations with $500$ gradient steps per LMO subproblem.

\begin{figure}[h]
    \centering
    \includegraphics[width=0.45\textwidth]{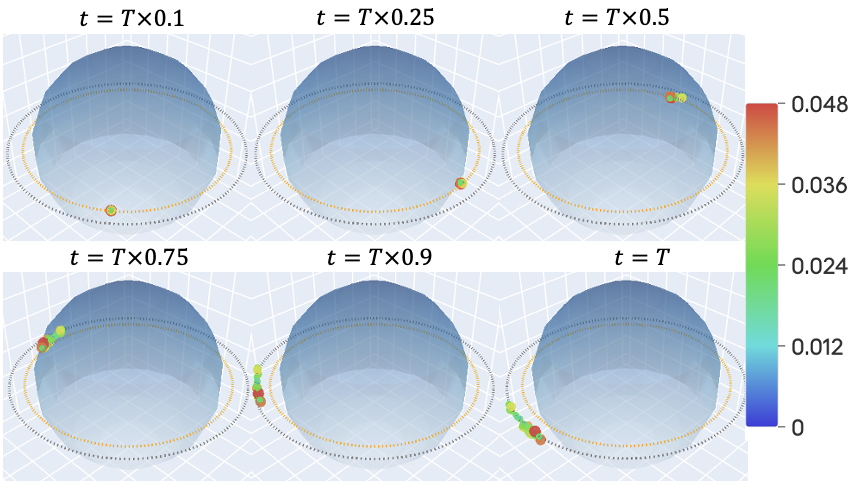}
    \caption{Snapshots of the weighted satellite distribution.}
    \label{fig:constellation_snapshots}
\end{figure}

\begin{figure}[h]
    \centering
    \includegraphics[width=0.5\textwidth]{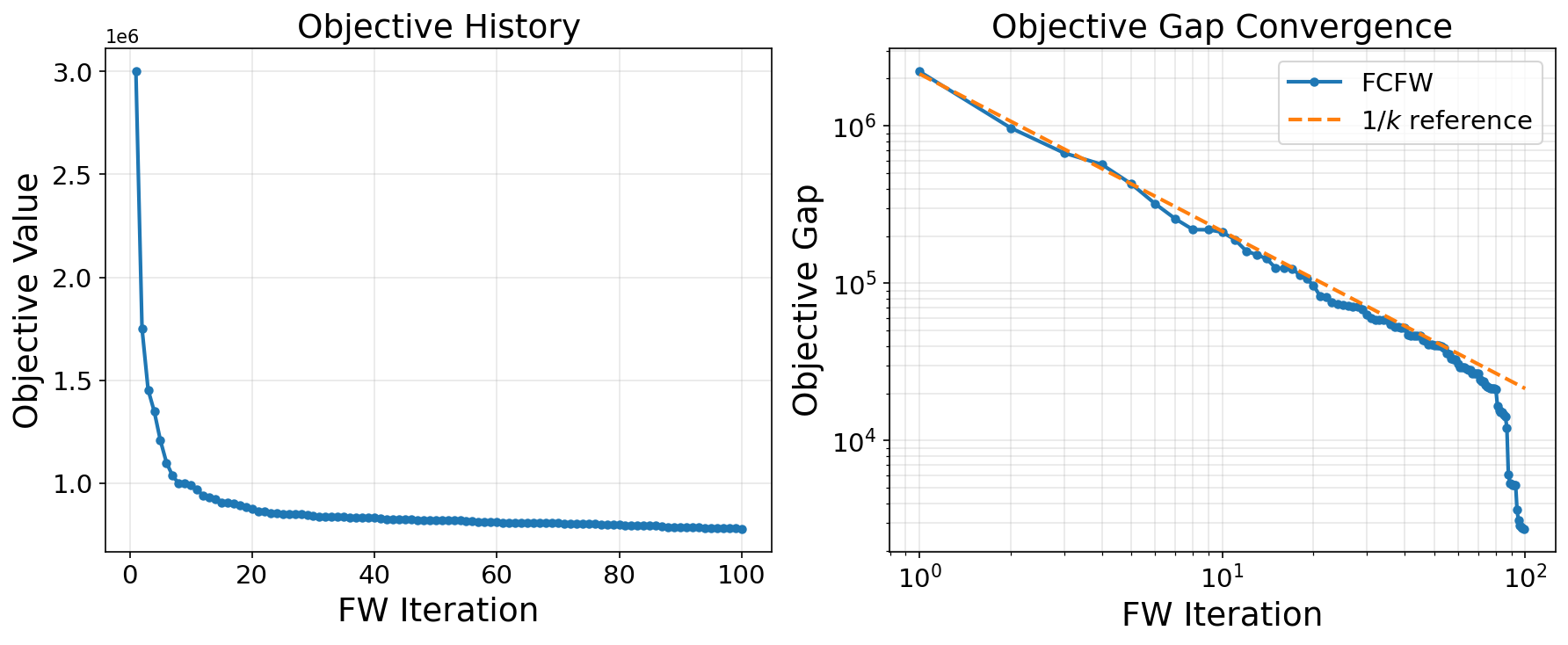}
    \caption{Objective and objective-gap convergence (satellite constellation coordination).}
    \label{fig:constellation_gap}
\end{figure}

Figure~\ref{fig:constellation_snapshots} shows snapshots of the weighted satellite distribution at six time instances. All satellites depart from the same initial position on the LEO at $t = 0$, and progressively spread along the target orbit as $t \to T$. Importantly, no in-plane constraint is imposed: the satellites are free to maneuver in full three-dimensional space, and the resulting active trajectories exhibit distinct orbital inclinations.
Figure~\ref{fig:constellation_gap} shows the objective value history and the FW objective gap on a log-log scale. The objective decreases rapidly in the first few iterations and then continues to improve more gradually. The objective gap closely tracks the theoretical $\mathcal{O}(1/k)$ convergence rate indicated by the reference line.

\section{Conclusions}\label{conclusion}
This paper proposed OM-MFC, which formulates large-population control directly as an optimization over occupation measures subject to linear dynamical constraints, and established convexity, weak compactness, and existence of optimal solutions under a positive-semidefinite interaction kernel. We developed FW and FCFW algorithms tailored to this setting, showing that each iteration reduces to solving classical optimal control subproblems and that the resulting iterates are finite mixtures of admissible trajectories with $\mathcal{O}(1/k)$ convergence guarantees. Numerical experiments on UAV swarm coordination and satellite constellation control demonstrated that the framework matches PDE-based baselines in low dimensions and scales to three-dimensional constrained environments at practical computational cost. An interesting direction for future work is to investigate how the population-level solutions produced by OM-MFC can be deployed in systems with a finite number of agents, for example by assigning agents to representative trajectories according to their associated weights. The proposed framework thus provides a promising planning layer for large-scale autonomous systems, and future work will also explore its integration with receding-horizon implementations, stochastic models, and distributed realizations.
\bibliographystyle{plain}        
\bibliography{autosam}           

\begin{thebibliography}{10}

\bibitem{ambrosio2005gradient}
Luigi Ambrosio, Nicola Gigli, and Giuseppe Savar{\'e}.
\newblock {\em Gradient flows: in metric spaces and in the space of probability measures}.
\newblock Springer, 2005.

\bibitem{bogachev2007measure}
Vladimir~I Bogachev.
\newblock {\em Measure theory}.
\newblock Springer, 2007.

\bibitem{bonnans2023large}
J~Fr{\'e}d{\'e}ric Bonnans, Kang Liu, Nadia Oudjane, Laurent Pfeiffer, and Cheng Wan.
\newblock Large-scale nonconvex optimization: randomization, gap estimation, and numerical resolution.
\newblock {\em SIAM Journal on Optimization}, 33(4):3083--3113, 2023.

\bibitem{2017boygeorec}
Nicholas Boyd, Geoffrey Schiebinger, and Benjamin Recht.
\newblock The alternating descent conditional gradient method for sparse inverse problems.
\newblock {\em SIAM Journal on Optimization}, 27(2), 2017.

\bibitem{2015bub}
S.~Bubeck.
\newblock Convex optimization: Algorithms and complexity.
\newblock {\em Foundations and Trends in Machine Learning}, 8(3--4):231--358, 2015.

\bibitem{cardaliaguet2017learning}
Pierre Cardaliaguet and Saeed Hadikhanloo.
\newblock Learning in mean field games: the fictitious play.
\newblock {\em ESAIM: Control, Optimisation and Calculus of Variations}, 23(2):569--591, 2017.

\bibitem{carmona2018probabilistic}
Ren{\'e} Carmona, Fran{\c{c}}ois Delarue, et~al.
\newblock {\em Probabilistic theory of mean field games with applications I-II}, volume~3.
\newblock Springer, 2018.

\bibitem{chen2020mean}
Dezhi Chen, Qi~Qi, Zirui Zhuang, Jingyu Wang, Jianxin Liao, and Zhu Han.
\newblock Mean field deep reinforcement learning for fair and efficient uav control.
\newblock {\em IEEE Internet of Things Journal}, 8(2):813--828, 2020.

\bibitem{du2025self}
Kai Du, Zhenjie Ren, Florin Suciu, and Songbo Wang.
\newblock Self-interacting approximation to mckean--vlasov long-time limit: a markov chain monte carlo method.
\newblock {\em Journal de Math{\'e}matiques Pures et Appliqu{\'e}es}, page 103782, 2025.

\bibitem{elamvazhuthi2019mean}
Karthik Elamvazhuthi and Spring Berman.
\newblock Mean-field models in swarm robotics: A survey.
\newblock {\em Bioinspiration \& Biomimetics}, 15(1):015001, 2019.

\bibitem{emerick2023continuum}
Max Emerick and Bassam Bamieh.
\newblock Continuum swarm tracking control: A geometric perspective in wasserstein space.
\newblock In {\em 2023 62nd IEEE Conference on Decision and Control (CDC)}, pages 1367--1374. IEEE, 2023.

\bibitem{fattorini1999infinite}
Hector~O Fattorini.
\newblock {\em Infinite dimensional optimization and control theory}, volume~54.
\newblock Cambridge University Press, 1999.

\bibitem{filippov2013differential}
Aleksei~Fedorovich Filippov.
\newblock {\em Differential equations with discontinuous righthand sides: control systems}.
\newblock Springer Science \& Business Media, 2013.

\bibitem{fornasier2014mean}
Massimo Fornasier and Francesco Solombrino.
\newblock Mean-field optimal control.
\newblock {\em ESAIM: Control, Optimisation and Calculus of Variations}, 20(4):1123--1152, 2014.

\bibitem{fung2025mean}
Samy~Wu Fung and Levon Nurbekyan.
\newblock Mean-field control barrier functions: A framework for real-time swarm control.
\newblock In {\em 2025 American Control Conference (ACC)}, pages 1--7. IEEE, 2025.

\bibitem{hu2025mf}
Anran Hu and Junzi Zhang.
\newblock Mf-oml: Online mean-field reinforcement learning with occupation measures for large population games.
\newblock {\em Applied Mathematics \& Optimization}, 92(3):1--54, 2025.

\bibitem{jaggi2013revisiting}
Martin Jaggi.
\newblock Revisiting frank-wolfe: Projection-free sparse convex optimization.
\newblock In {\em International conference on machine learning}, pages 427--435. PMLR, 2013.

\bibitem{1960kie}
Jack Kiefer.
\newblock Optimum experimental designs {V}, with applications to systematic and rotatable designs.
\newblock In {\em Proceedings of the fourth Berkeley symposium on mathematical statistics and probability}, volume~1, pages 381--405. Univ of California Press, 1960.

\bibitem{konig2017mean}
Wolfgang K{\"o}nig and Chiranjib Mukherjee.
\newblock Mean-field interaction of brownian occupation measures, i: Uniform tube property of the coulomb functional.
\newblock 2017.

\bibitem{lacker2017limit}
Daniel Lacker.
\newblock Limit theory for controlled mckean--vlasov dynamics.
\newblock {\em SIAM Journal on Control and Optimization}, 55(3):1641--1672, 2017.

\bibitem{lasserre2008nonlinear}
Jean~B Lasserre, Didier Henrion, Christophe Prieur, and Emmanuel Tr{\'e}lat.
\newblock Nonlinear optimal control via occupation measures and lmi-relaxations.
\newblock {\em SIAM journal on control and optimization}, 47(4):1643--1666, 2008.

\bibitem{lavigne2023generalized}
Pierre Lavigne and Laurent Pfeiffer.
\newblock Generalized conditional gradient and learning in potential mean field games.
\newblock {\em Applied Mathematics \& Optimization}, 88(3):89, 2023.

\bibitem{2018mei}
Song Mei, Andrea Montanari, and Phan-Minh Nguyen.
\newblock A mean field view of the landscape of two-layer neural networks.
\newblock {\em Proceedings of the National Academy of Sciences}, 115(33), jul 2018.

\bibitem{ruthotto2020machine}
Lars Ruthotto, Stanley~J Osher, Wuchen Li, Levon Nurbekyan, and Samy~Wu Fung.
\newblock A machine learning framework for solving high-dimensional mean field game and mean field control problems.
\newblock {\em Proceedings of the National Academy of Sciences}, 117(17):9183--9193, 2020.

\bibitem{vidal2025kernel}
Alexander Vidal, Samy~Wu Fung, Stanley Osher, Luis Tenorio, and Levon Nurbekyan.
\newblock Kernel expansions for high-dimensional mean-field control with non-local interactions.
\newblock In {\em 2025 American Control Conference (ACC)}, pages 4164--4171. IEEE, 2025.

\bibitem{vinter2010optimal}
Richard~B Vinter and RB~Vinter.
\newblock {\em Optimal control}, volume~2.
\newblock Springer, 2010.

\bibitem{yu2025derivative}
Di~Yu, Shane~G Henderson, and Raghu Pasupathy.
\newblock The derivative-free fully-corrective frank-wolfe algorithm for optimizing functionals over probability spaces.
\newblock In {\em 2025 Winter Simulation Conference (WSC)}, pages 3358--3369. IEEE, 2025.

\bibitem{yu2025deterministic}
Di~Yu, Shane~G. Henderson, and Raghu Pasupathy.
\newblock Deterministic and stochastic frank-wolfe recursion on probability spaces.
\newblock {\em Mathematics of Operations Research}, 2025.

\bibitem{yu2025frank}
Di~Yu, Shane~G Henderson, and Raghu Pasupathy.
\newblock Frank-wolfe recursions for the emergency response problem on measure spaces.
\newblock {\em arXiv preprint arXiv:2507.09808}, 2025.

\bibitem{yuyoupei2026ACC}
Di~Yu, Sixiong You, and Chaoying Pei.
\newblock Convexifying mean-field control: An occupation-measure and frank--wolfe approach.
\newblock In {\em Proceedings of the American Control Conference}, 2026.
\newblock Accepted for publication.

\end{thebibliography}



\appendix
\setlength{\abovedisplayskip}{3pt}
\setlength{\belowdisplayskip}{3pt}
\setlength{\abovedisplayshortskip}{1pt}
\setlength{\belowdisplayshortskip}{1pt}
\addtolength{\baselineskip}{-10pt}
\section{PDE Interpretation of the Measure Formulation}\label{sec:PDE}

\begin{Lemma}[Disintegration]\label{lem:disintegration}
Let $(\mu,\nu)\in\Delta$ be feasible. 
Then there exists a narrowly measurable family of probability measures 
$\{\rho_t\}_{t\in[0,T]}\subset\mathcal P(\mathcal X)$ 
and a measurable Markov kernel $\lambda_{t,x}$ from $[0,T]\times\mathcal X$ to $\mathcal U$
such that, for all $\varphi\in C(\Sigma)$,
\begin{align}\label{eq:disint-test}
&\int_\Sigma \varphi(t,x,u)\,d\mu(t,x,u)
\nonumber \\ & \qquad =
\int_0^T \int_{\mathcal X}\int_{\mathcal U}
\varphi(t,x,u)\, d\lambda_{t,x}(u)\, d\rho_t(x)\, dt,
\end{align}
and $\rho_t$ is the $x$–marginal of $\mu$ at time $t$.
\end{Lemma}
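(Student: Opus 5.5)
The plan is to disintegrate $\mu$ twice, first along the time coordinate and then along the state coordinate. The key input is that the time marginal of $\mu$ is Lebesgue measure on $[0,T]$. Everything else then follows from the standard disintegration theorem on compact metric (hence Polish) spaces; see, e.g., \cite{ambrosio2005gradient,bogachev2007measure}.

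\textbf{Step 1 (time marginal).} We identify the time marginal $\pi_t\#\mu$. In the weak Liouville constraint \eqref{eq:feasibleSet}, take a test function depending only on time, $v(t,x)=w(t)$ with $w\in C^1([0,T])$. Since $\nabla_x v=0$, and $\nu,\rho_0$ are probability measures, the constraint reduces to
\[
w(T)-w(0)=\int_\Sigma w'(t)\,d\mu(t,x,u).
\]
The left-hand side equals $\int_0^T w'(t)\,dt$. As $w$ ranges over $C^1([0,T])$, the derivatives $w'$ range over all of $C([0,T])$. Hence $\int_0^T h\,dt=\int_\Sigma h(t)\,d\mu$ for every $h\in C([0,T])$, and by the Riesz representation theorem $\pi_t\#\mu=\mathcal L^1|_{[0,T]}$. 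This is consistent with the total mass $\mu(\Sigma)=T$.

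\textbf{Step 2 (disintegration in time, then in state).} Since $\Sigma=[0,T]\times(\mathcal X\times\mathcal U)$ is compact metric, the disintegration theorem applied to the projection $\pi_t$ gives a Borel family $t\mapsto\mu_t\in\mathcal P(\mathcal X\times\mathcal U)$, unique for a.e.\ $t$, with $d\mu=\mu_t\,dt$. We set $\rho_t:=\pi_x\#\mu_t\in\mathcal P(\mathcal X)$. Narrow measurability of $t\mapsto\rho_t$ follows because $t\mapsto\int\psi\,d\rho_t=\int\psi(x)\,d\mu_t$ is Borel for every $\psi\in C(\mathcal X)$. This also makes $\rho_t$ the $x$-marginal of $\mu$ at time $t$, as the statement requires.

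Next we disintegrate the $(t,x)$-marginal $\gamma:=\pi_{t,x}\#\mu=\rho_t\,dt$, viewed as a measure on the compact space $[0,T]\times\mathcal X$. Applying the disintegration theorem to $\mu$ with respect to the map $\pi_{t,x}$ yields a Borel kernel $(t,x)\mapsto\lambda_{t,x}\in\mathcal P(\mathcal U)$ with $d\mu=d\lambda_{t,x}(u)\,d\gamma(t,x)$. This is the measurable Markov kernel in the statement.

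\textbf{Step 3 (assembly).} For $\varphi\in C(\Sigma)$ we combine the two disintegrations with Fubini's theorem, using that $\gamma=\rho_t\,dt$:
\[
\int_\Sigma\varphi\,d\mu=\int_{[0,T]\times\mathcal X}\int_{\mathcal U}\varphi\,d\lambda_{t,x}\,d\gamma=\int_0^T\int_{\mathcal X}\int_{\mathcal U}\varphi\,d\lambda_{t,x}\,d\rho_t\,dt,
\]
which is \eqref{eq:disint-test}. On the Lebesgue-null set of times where the disintegration is not defined, we set $\rho_t$ to an arbitrary fixed probability measure, so that $\{\rho_t\}$ is defined for every $t$.

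\textbf{Main obstacle.} The only non-routine point is Step 1. Without showing that $\pi_t\#\mu$ is exactly Lebesgue measure, the first disintegration would produce $d\mu=\mu_t\,d\tau(t)$ for some other time marginal $\tau$, and the factor $dt$ in \eqref{eq:disint-test} would not be justified. The argument needs the constraint to hold for purely time-dependent test functions, which are admissible in $C^1([0,T]\times\mathcal X)$. It also needs the density of $\{w':w\in C^1([0,T])\}$ in $C([0,T])$, which holds trivially since every continuous $h$ is the derivative of its antiderivative.
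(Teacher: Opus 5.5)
Your proof is correct and follows the same route as the paper: both identify the time marginal of $\mu$ as Lebesgue measure by testing the Liouville constraint with purely time-dependent test functions $v(t,x)=w(t)$, and both then obtain $\rho_t$ and $\lambda_{t,x}$ from the standard disintegration theorem on the compact space $\Sigma$. The differences are only in ordering and explicitness: you identify the marginal before disintegrating and spell out that every $h\in C([0,T])$ is some $w'$, which the paper leaves implicit. The paper instead disintegrates against a general time marginal $m$ first and then shows $m(dt)=dt$.
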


\begin{proof}
By the disintegration theorem on the product space 
$\Sigma=[0,T]\times\mathcal X\times\mathcal U$
(e.g.\ \cite[Corollary~10.4.17]{bogachev2007measure}), there exist
a finite Borel measure $m$ on $[0,T]$, a narrowly measurable family
$\{\rho_t\}_{t\in[0,T]}\subset\mathcal P(\mathcal X)$, and a measurable 
Markov kernel $\lambda_{t,x}$ from $[0,T]\times\mathcal X$ to $\mathcal U$ such that,
for all $\varphi\in C(\Sigma)$,
\[
\int_\Sigma \varphi\,d\mu
=
\int_0^T \int_{\mathcal X}\int_{\mathcal U}
\varphi(t,x,u)\, d\lambda_{t,x}(u)\, d\rho_t(x)\,m(dt),
\]
where $m$ is the time marginal (projection) of $\mu$ on $[0,T]$.

Since $(\mu,\nu)\in\Delta$, take in the Liouville constraint~\eqref{eq:feasibleSet}
test functions of the form $v(t,x)=\psi(t)$ with $\psi\in C^1([0,T])$.
Then $\nabla_x v\equiv0$, and feasibility gives
\[
\psi(T)-\psi(0) 
= \int_\Sigma \partial_t v\,d\mu
= \int_\Sigma \psi'(t)\,d\mu
= \int_0^T \psi'(t)\,m(dt).
\]
On the other hand,
\(
\psi(T)-\psi(0) = \int_0^T \psi'(t)\,dt.
\)
This implies $m(dt)=dt$. Substituting this into the disintegration identity yields \eqref{eq:disint-test}.
\end{proof}

\begin{Lemma}[Continuity Equation Representation]\label{lem:superposition}
Let $(\mu,\nu)\in\Delta$ be feasible and let 
$\{\rho_t\}_{t\in[0,T]}$ and $\lambda_{t,x}$ be given by 
Lemma~\ref{lem:disintegration}. Define
\[
F(t,x) := \int_{\mathcal U} f(x,u)\,d\lambda_{t,x}(u),
\qquad (t,x)\in[0,T]\times\mathcal X.
\]
\begin{itemize}
\item[(i)] $(\rho_t)_{t\in[0,T]}$ solves the continuity equation
\begin{equation}\label{eq:CE}
\partial_t \rho_t + \nabla_x\!\cdot(F(t,x)\rho_t)=0
\end{equation}
\item[(ii)] Let $\Gamma_x := AC([0,T];\mathcal X)$.
There exists a probability measure $\eta$ on $C([0,T];\mathcal X)$,
concentrated on $\Gamma_x$ (i.e., $\eta(\Gamma_x)=1$), such that for any test
functions $\varphi \in C([0,T]\times\mathcal X\times\mathcal U)$ and
$\psi \in C(\mathcal X)$,
\begin{align}\label{eq:measure_rep}
\int_{\Sigma} \varphi \, d\mu
&= \int_{\Gamma_x}
\left[ \int_0^T \int_{\mathcal U}
\varphi(t,\gamma(t),u)\, d\lambda_{t,\gamma(t)}(u)\, dt \right] d\eta(\gamma), \nonumber\\
\int_{\mathcal X} \psi \, d\nu
&= \int_{\Gamma_x} \psi(\gamma(T)) \, d\eta(\gamma).
\end{align}
Moreover, $\eta$ is consistent with the initial distribution $\rho_0$
(i.e., $(e_0)_\#\eta = \rho_0$) and is concentrated on integral curves of $F$,
meaning that for $\eta$-almost every $\gamma\in\Gamma_x$,
\begin{equation}\label{eq:ODE_constraint}
\dot{\gamma}(t)=F(t,\gamma(t)) \quad \text{for a.e. } t\in[0,T].
\end{equation}
\end{itemize}
\end{Lemma}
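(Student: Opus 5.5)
For part (i), I will substitute the disintegration of Lemma~\ref{lem:disintegration} into the Liouville constraint~\eqref{eq:feasibleSet}. The test integrand $\partial_t v + \nabla_x v\cdot f(x,u)$ is affine in $f$. Integrating it against $\lambda_{t,x}$ therefore replaces $f(x,u)$ by $F(t,x)$, giving, for every $v\in C^1([0,T]\times\mathcal X)$,
\[
\int_{\mathcal X} v(T,\cdot)\,d\nu-\int_{\mathcal X} v(0,\cdot)\,d\rho_0=\int_0^T\!\!\int_{\mathcal X}\big(\partial_t v+\nabla_x v\cdot F\big)\,d\rho_t\,dt .
\]
This is exactly the distributional form of~\eqref{eq:CE} with initial datum $\rho_0$ and terminal datum $\nu$. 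To make the identification of endpoints precise, I will take $v(t,x)=\theta(t)\phi(x)$ with $\theta$ supported in $(0,T)$ to get the PDE in $\mathcal D'$. Then I will use the standard fact that distributional solutions admit a narrowly continuous representative $t\mapsto\rho_t$, and conclude $\rho_T=\nu$, with $\rho_t\to\rho_0$ as $t\to0$. Measurability of $F$ comes from measurability of the kernel together with continuity of $f$. Boundedness, $|F|\le \sup_{\mathcal X\times\mathcal U}|f|<\infty$, follows from compactness and continuity. Hence $\int_0^T\!\int |F|\,d\rho_t\,dt<\infty$.

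For part (ii), I will invoke Ambrosio's superposition principle \cite[Theorem~8.2.1]{ambrosio2005gradient}. Its hypothesis is exactly a narrowly continuous solution of the continuity equation with $\int_0^T\!\int|F|\,d\rho_t\,dt<\infty$, which holds since $F$ is bounded. The theorem yields a probability measure $\eta$ on $C([0,T];\mathbb R^{n_x})$ concentrated on absolutely continuous curves solving $\dot\gamma=F(t,\gamma)$ a.e., with $(e_t)_\#\eta=\rho_t$ for all $t$. In particular $(e_0)_\#\eta=\rho_0$ and $(e_T)_\#\eta=\nu$.

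The representation~\eqref{eq:measure_rep} then follows from Fubini and the marginal identity:
\[
\int_{\Gamma_x}\int_0^T\!\!\int_{\mathcal U}\varphi(t,\gamma(t),u)\,d\lambda_{t,\gamma(t)}(u)\,dt\,d\eta
=\int_0^T\!\!\int_{\mathcal X}\!\int_{\mathcal U}\varphi\,d\lambda_{t,x}\,d\rho_t\,dt .
\]
The right-hand side equals $\int_\Sigma\varphi\,d\mu$ by~\eqref{eq:disint-test}. Joint measurability of $(t,\gamma)\mapsto\int\varphi\,d\lambda_{t,\gamma(t)}$ comes from composing the measurable kernel with the continuous evaluation map. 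The terminal identity is $(e_T)_\#\eta=\nu$. Curves stay in $\mathcal X$ for $\eta$-a.e.\ $\gamma$, since $\rho_t$ is supported in $\mathcal X$ for every $t$ and $\gamma$ is continuous. One takes a countable dense set of times and uses that $\mathcal X$ is closed.

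The main obstacle is the endpoint bookkeeping and the domain. Superposition is stated on $\mathbb R^{n_x}$, so I will extend $F$ by zero outside $\mathcal X$; this does not affect anything because $\rho_t$ is concentrated on $\mathcal X$. The other issue is showing that the weak formulation with $C^1$ test functions up to the boundary times pins down $\rho_T=\nu$ rather than merely some limit. I would handle this by choosing $v(t,x)=\theta_\epsilon(t)\phi(x)$, where $\theta_\epsilon$ ramps from $0$ to $1$ near $t=T$, and passing $\epsilon\to0$ using narrow continuity.
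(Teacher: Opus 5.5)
Your proposal is correct and follows the same route as the paper: you substitute the disintegration of Lemma~\ref{lem:disintegration} into the Liouville constraint to obtain the weak continuity equation for the bounded field $F$, pass to the narrowly continuous representative \cite[Lemma~8.1.2]{ambrosio2005gradient}, and apply the superposition principle \cite[Theorem~8.2.1]{ambrosio2005gradient}. Your extra steps fill in details the paper leaves implicit: identifying $\rho_T=\nu$ and the prescribed $\rho_0$ at the endpoints, extending $F$ by zero to $\mathbb R^{n_x}$, confining the curves to $\mathcal X$, and deriving \eqref{eq:measure_rep} via Fubini; also, the cited theorem is stated under an $L^p$ bound with $p>1$ rather than $p=1$, which is harmless here because $F$ is bounded.
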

\begin{proof}
(i) Let $\{\rho_t\}$ and $\lambda_{t,x}$ be given by Lemma~\ref{lem:disintegration}, and define $F(t,x) := \int_{\mathcal U} f(x,u)\,d\lambda_{t,x}(u)$. From~\eqref{eq:disint-test} for all $v\in C^1([0,T]\times\mathcal X)$ we have 
\begin{align*}
&\int_\Sigma \big(\partial_t v(t,x) + \nabla_xv(t,x)\cdot f(x,u)\big)\,d\mu \\
&\quad= \int_0^T\!\!\int_{\mathcal X}\!\!\int_{\mathcal U}
\big(\partial_tv + \nabla_xv\cdot f(x,u)\big)
\,d\lambda_{t,x}(u)\,d\rho_t(x)\,dt \\
&\quad= \int_0^T\!\!\int_{\mathcal X}
\big(\partial_tv(t,x) + \nabla_xv(t,x)\cdot F(t,x)\big)
\,d\rho_t(x)\,dt.
\end{align*} 
By feasibility of $(\mu,\nu)\in\Delta$,
\begin{align*}
    \int_\Sigma \big(\partial_t v &+ \nabla_xv\cdot f(x,u)\big)\,d\mu \\ &= \int_{\mathcal X} v(T,x)\,d\rho_T(x) - \int_{\mathcal X} v(0,x)\,d\rho_0(x).
\end{align*} 
Thus,
\begin{align*}
\int_0^T\!\!\int_{\mathcal X}
\big(\partial_t v &+ \nabla_x v\cdot F\big)\,d\rho_t\,dt \\
&= \int_{\mathcal X} v(T,x)\,d\rho_T(x) - \int_{\mathcal X} v(0,x)\,d\rho_0(x).
\end{align*} 
This is exactly the weak formulation of the continuity equation~\eqref{eq:CE}. 
Therefore $(\rho_t)$ solves \eqref{eq:CE}. Since $F$ is bounded, the weak solution $(\rho_t)$ to~\eqref{eq:CE} admits a narrowly continuous representative on $[0,T]$; see \cite[Lemma~8.1.2]{ambrosio2005gradient}. We identify $\rho_t$ with this representative for the remainder of the discussion.

(ii) By our standing assumptions on $f$ and compactness of 
$\mathcal X,\mathcal U$, the vector field $F$ satisfies the hypotheses of 
\cite[Theorem~8.2.1]{ambrosio2005gradient}. Since $(\rho_t)$ solves 
\eqref{eq:CE}, we can apply the probabilistic representation theorem 
\cite[Theorem~8.2.1]{ambrosio2005gradient} to $(\rho_t)$ and $F$, which 
yields the existence of a probability measure $\eta$ on $\Gamma_x$ concentrated on the integral curves of $F$ such that $\rho_t = (e_t)_\#\eta$ for all $t$ satisfying~\eqref{eq:measure_rep}, 
proving the assertion. 
\end{proof}

\section{Approximation and Relaxation Results}
\begin{Lemma}[Approximation by Classical Trajectories]\label{lem:approximation}
Let $\gamma \in \Gamma_x$ be an integral curve of the averaged dynamics
defined in Lemma~\ref{lem:superposition}, generated by the relaxed control family
$\{\lambda_{t,\gamma(t)}\}_{t\in[0,T]}$.
Define the augmented cost state variable $z:[0,T]\to\mathbb R$ by
\begin{equation}\label{eq:z_relaxed}
\dot{z}(t)=\int_{\mathcal U} g_{\mu_k}(t,\gamma(t),u)\,d\lambda_{t,\gamma(t)}(u),
\qquad z(0)=0,
\end{equation}
where $g_{\mu_k}$ is defined in Theorem~\ref{thm:firstVariation}.
Then there exists a sequence of measurable controls $u_j:[0,T]\to\mathcal U$ and
corresponding classical trajectories $x_j\in AC([0,T];\mathcal X)$ with $\dot x_j(t)=f(x_j(t),u_j(t))$ for a.e. $t\in[0,T]$, $x_j(0)=\gamma(0)$, such that, letting $y_j$ be defined by
\begin{equation}\label{eq:z_classical}
\dot y_j(t)=g_{\mu_k}(t,x_j(t),u_j(t)),\qquad y_j(0)=0,
\end{equation}
we have the uniform convergence
\begin{equation}\label{eq:uniform_conv_aug}
\lim_{j\to\infty}\sup_{t\in[0,T]}
\Big(|x_j(t)-\gamma(t)|+|y_j(t)-z(t)|\Big)=0.
\end{equation}
\end{Lemma}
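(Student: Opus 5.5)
The plan is to reduce the statement to the classical relaxation (chattering) theorem for control systems, as in \cite[Theorem~2.7.2]{vinter2010optimal}, applied to an \emph{augmented} system that carries the running cost as an extra state. Define the augmented state $X=(x,y)\in\mathcal X\times\mathbb R$ and the augmented dynamics
\[
\tilde f(t,X,u):=\big(f(x,u),\,g_{\mu_k}(t,x,u)\big).
\]
The relaxed augmented trajectory is $\Gamma(t):=(\gamma(t),z(t))$. By \eqref{eq:ODE_constraint} and \eqref{eq:z_relaxed} it satisfies $\dot\Gamma(t)=\int_{\mathcal U}\tilde f(t,\Gamma(t),u)\,d\lambda_{t,\gamma(t)}(u)$ for a.e.\ $t$. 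So $\Gamma$ is a relaxed trajectory of the augmented system, generated by the measurable relaxed control $t\mapsto\lambda_{t,\gamma(t)}$. Measurability of this control follows because $\lambda_{t,x}$ is a measurable Markov kernel (Lemma~\ref{lem:disintegration}) composed with the continuous map $t\mapsto(t,\gamma(t))$.

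The first step is to verify the hypotheses of the relaxation theorem for $\tilde f$. We need $\tilde f$ measurable in $t$, continuous in $u$, bounded on the compact set $[0,T]\times\mathcal X\times\mathcal U$, and Lipschitz in $X$ uniformly in $(t,u)$. For the $f$-component this is the standing assumption. For $g_{\mu_k}=\ell_0+2\lambda\int W(x-y)\,d\mu_{k,t}(y,u')$ we need $\ell_0$ continuous (assumed) and the interaction part continuous in $x$. I would assume $W$ is continuous, in fact Lipschitz on the compact difference set $\mathcal X-\mathcal X$ (the Gaussian kernel used in the paper qualifies). Then $x\mapsto\int W(x-y)\,d\mu_{k,t}$ is Lipschitz uniformly in $t$, because each $\mu_{k,t}$ is a probability measure by Lemma~\ref{lem:disintegration}.

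If $\ell_0$ is only continuous in $x$, I would weaken the requirement to continuity plus boundedness. Uniform continuity on the compact domain then suffices in the Gronwall-type estimate below. The $y$-component of $\tilde f$ does not depend on $y$, so Lipschitz continuity in $y$ is trivial.

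The core construction follows the standard chattering argument.
\begin{enumerate}
\item Partition $[0,T]$ into $j$ subintervals.
\item On each subinterval, approximate the relaxed control $\lambda_{t,\gamma(t)}$ by finitely supported measures, using Carath\'eodory's theorem on the convex hull of $\tilde f(t,\Gamma(t),\mathcal U)\subset\mathbb R^{n_x+1}$ so that at most $n_x+2$ atoms suffice.
\item Realize each atomic measure by rapid switching of ordinary controls on sub-subintervals whose lengths are proportional to the weights.
\end{enumerate}
The resulting ordinary controls $u_j$ make $\int_0^t\tilde f(s,\Gamma(s),u_j(s))\,ds$ converge to $\int_0^t\dot\Gamma(s)\,ds$ uniformly in $t$; this is the weak-$^*$ (Young measure) convergence $\delta_{u_j(t)}\,dt\rightharpoonup\lambda_{t,\gamma(t)}\,dt$. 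Now let $(x_j,y_j)$ be the classical augmented trajectory with $x_j(0)=\gamma(0)$ and $y_j(0)=0$. A Gronwall estimate on $|X_j(t)-\Gamma(t)|$ then gives \eqref{eq:uniform_conv_aug}: the Lipschitz property in $X$ controls the state mismatch, and the uniform smallness of the integrated velocity mismatch controls the forcing. Alternatively, the whole step can be quoted directly as \cite[Theorem~2.7.2]{vinter2010optimal}, applied to the augmented system.

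The main obstacle is the state constraint $x_j(t)\in\mathcal X$. The relaxation theorem yields trajectories that are uniformly close to $\gamma$, but they need not remain inside the compact set $\mathcal X$. I would first handle this by assuming $f$ and $g_{\mu_k}$ are defined and Lipschitz on a neighborhood of $\mathcal X$, so that the approximants lie in an $\varepsilon_j$-enlargement of $\mathcal X$ with $\varepsilon_j\to0$. This is sufficient for the use of the lemma in Theorem~\ref{thm:lmo}, where only the cost values matter. I would flag that exact invariance of $\mathcal X$ requires an additional inward-pointing or viability hypothesis.

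A secondary technical point is the measurability of $t\mapsto g_{\mu_k}(t,x,u)$, since $\mu_k$ is only disintegrated for a.e.\ $t$. Carath\'eodory-type conditions (measurable in $t$, continuous in $(x,u)$) suffice for the theorem, and these are inherited from the narrow measurability of $t\mapsto\mu_{k,t}$.
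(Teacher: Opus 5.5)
Your route is the paper's: augment the state with the running cost via $\tilde f=(f,g_{\mu_k})$, observe that $(\gamma,z)$ is a relaxed trajectory of the augmented system, invoke \cite[Theorem~2.7.2]{vinter2010optimal}, and represent the resulting ordinary trajectories by measurable controls. (The paper writes the relaxed-trajectory property as $(\dot\gamma,\dot z)\in\operatorname{co}G(t,\gamma(t))$ with $G(t,x)=\{\tilde f(t,x,u):u\in\mathcal U\}$.) Your hypothesis checks, which the paper omits, are correct. $W$ must be at least continuous, and Lipschitz if you quote Vinter, whose theorem needs the augmented multifunction to be Lipschitz in the state. Your chattering-plus-Gronwall variant needs only uniform continuity of $g_{\mu_k}$ in $x$, since $y$ does not feed back into the $x$-equation.

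The state-constraint problem you flag is a genuine gap, and the paper's proof has it as well. Vinter's Theorem~2.7.2 carries no state constraint, so it yields ordinary trajectories uniformly close to $(\gamma,z)$ but not $x_j\in AC([0,T];\mathcal X)$. Under the paper's standing assumptions the statement is in fact false without an inward-pointing or viability hypothesis. Take $\mathcal X=[0,1]^2$, $\mathcal U=\{(1,-1),(-1,1)\}$, $f(x,u)=u$, $\rho_0=\delta_0$, $\mu=dt\otimes\delta_0\otimes\tfrac12(\delta_{(1,-1)}+\delta_{(-1,1)})$, and $\nu=\delta_0$. This pair lies in $\Delta$, and $\gamma\equiv0$ is the corresponding integral curve. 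But every classical trajectory from $0$ has $x_1+x_2\equiv0$ and $|\dot x_1|=1$ a.e., so it leaves $\mathcal X$ on every interval $[0,\tau]$. Hence a hypothesis must be added to the lemma, or its conclusion weakened as you propose.

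Your $\varepsilon_j$-enlargement fallback does not by itself rescue Theorem~\ref{thm:lmo}, however. It gives $C(\gamma)\ge V'(\xi)$, where $V'$ is the value over trajectories allowed to leave $\mathcal X$ (with extended data). That suffices only if $V$ is read as this enlarged value and its minimizer is assumed to lie in $\Gamma$. If $V$ is the state-constrained value suggested by the selection $\omega_k(\xi)\in\Gamma$, you also need $V'\to V$ as the neighborhood shrinks, which is the same viability question again.
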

\begin{proof}
Consider the augmented state space $\mathcal X\times\mathbb R$ and define
\begin{align*}
\tilde f(t,x,u)&:=\big(f(x,u)^\top,\; g_{\mu_k}(t,x,u)\big)^\top, \\
G(t,x)&:=\{\tilde f(t,x,u):u\in\mathcal U\}.
\end{align*}
By construction of $z$ in~\eqref{eq:z_relaxed} and the averaged dynamics in
Lemma~\ref{lem:superposition}, the pair $(\gamma,z)$ with initial condition
$(\gamma(0),z(0))=(\gamma(0),0)$ satisfies the convexified differential inclusion
\[
(\dot\gamma(t),\dot z(t)) \in \operatorname{co}\,G(t,\gamma(t))
\quad \text{for a.e. } t\in[0,T],
\]
where $\operatorname{co} G(t,x)$ denotes the convex hull of $G(t,x)$ and hence is a relaxed $G$--trajectory in the terminology of
\cite[Theorem~2.7.2]{vinter2010optimal}.
Applying \cite[Theorem~2.7.2]{vinter2010optimal} to the multifunction $G$
yields a sequence of ordinary $G$--trajectories $(x_j,y_j)$ with the
same initial condition such that $(x_j,y_j)\to(\gamma,z)$ uniformly on
$[0,T]$. Since $G(t,x)=\{\tilde f(t,x,u):u\in\mathcal U\}$, each ordinary
$G$--trajectory admits a measurable control representation
$\dot x_j=f(x_j,u_j)$ and $\dot y_j=g_{\mu_k}(t,x_j,u_j)$ for some measurable
$u_j:[0,T]\to\mathcal U$. This proves~\eqref{eq:uniform_conv_aug}.
\end{proof}

\begin{Corollary}[Lower Bound for FW Subproblem]\label{cor:fw_lower_bound}
Let $\gamma \in \Gamma_x$ be as in Lemma~\ref{lem:approximation} with initial state $\xi = \gamma(0)$. The cost along $\gamma$ is lower-bounded by the value function of the classical parametric problem:
\begin{align}\label{eq:fw_inequality}
&C(\gamma):=\int_0^T \int_{\mathcal U} g_{\mu_k}(t,\gamma(t),u)\,d\lambda_{t,\gamma(t)}(u)\,dt + \Psi(\gamma(T)) \nonumber \\
& \ge 
\inf_{\substack{x(0)=\xi \\ \dot{x}=f(x,u)}} \left\{ \int_0^T g_{\mu_k}(t,x,u)\,dt + \Psi(x(T)) \right\}.
\end{align}
\end{Corollary}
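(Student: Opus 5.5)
The plan is to invoke Lemma~\ref{lem:approximation}. It gives classical trajectories whose running cost and endpoint converge uniformly to those of the relaxed curve $\gamma$. We then pass to the limit in the classical cost and compare with the infimum. Write $V(\xi)$ for the right-hand side of \eqref{eq:fw_inequality}, as in \eqref{eq:equivFWSub}.

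First, Lemma~\ref{lem:approximation} supplies measurable controls $u_j$ and classical trajectories $x_j$ with $x_j(0)=\gamma(0)=\xi$ and $\dot x_j=f(x_j,u_j)$. Each pair $(x_j,u_j)$ is therefore admissible for the infimum in \eqref{eq:fw_inequality}, and
\[
V(\xi)\le \int_0^T g_{\mu_k}(t,x_j(t),u_j(t))\,dt+\Psi(x_j(T)) = y_j(T)+\Psi(x_j(T))
\]
for every $j$, using the augmented state $y_j$ from \eqref{eq:z_classical}. Similarly, by \eqref{eq:z_relaxed} we have $C(\gamma)=z(T)+\Psi(\gamma(T))$.

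Second, let $j\to\infty$. The uniform convergence \eqref{eq:uniform_conv_aug}, evaluated at $t=T$, gives $y_j(T)\to z(T)$ and $x_j(T)\to\gamma(T)$. Since $\Psi$ is continuous on the compact set $\mathcal X$, we get $\Psi(x_j(T))\to\Psi(\gamma(T))$. Hence the right-hand side of the displayed inequality converges to $C(\gamma)$, and $V(\xi)\le C(\gamma)$ follows.

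The only delicate point is whether Lemma~\ref{lem:approximation} is legitimately applicable, in particular whether $g_{\mu_k}$ is regular enough for the relaxation theorem of \cite{vinter2010optimal}. That theorem needs $g_{\mu_k}$ to be measurable in $t$, continuous in $u$, and locally Lipschitz or at least bounded in $x$. Since the lemma is stated earlier we take it as given; if needed, this regularity follows from $\ell_0$ being bounded continuous and $W$ being continuous, with the interaction integral in \eqref{eq:gmu} being jointly continuous in $x$ by dominated convergence. Once the lemma applies, the argument above is just the limit of a pointwise inequality, so no further obstacle is expected.
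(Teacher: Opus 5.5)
Your proof is correct and follows the paper's argument essentially verbatim: you bound each approximating classical cost $y_j(T)+\Psi(x_j(T))$ below by the infimum, then pass to the limit using the uniform convergence of Lemma~\ref{lem:approximation} at $t=T$. Your explicit appeal to the continuity of $\Psi$, and your remark on the regularity of $g_{\mu_k}$ needed for the relaxation theorem, spell out points the paper leaves implicit.
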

\begin{proof}
Let $\{(x_j, u_j, y_j)\}$ be the approximating sequence from Lemma~\ref{lem:approximation}. 
The uniform convergence established in \eqref{eq:uniform_conv_aug} implies the convergence of the endpoints: $y_j(T) \to z(T)$ and $x_j(T) \to \gamma(T)$. 
Consequently, the total cost of the classical trajectories converges to the generalized cost:
\[
\lim_{j\to\infty} \big( y_j(T) + \Psi(x_j(T)) \big) = z(T) + \Psi(\gamma(T)) = C(\gamma).
\]
Since the cost of every feasible classical trajectory $(x_j, u_j)$ is bounded below by the infimum, the inequality \eqref{eq:fw_inequality} follows immediately by taking the limit $j\to\infty$.
\end{proof}

\section{Proof of Theorem~\ref{thm:lmo}}\label{sec:appendix_lmo_proof}
Let $(\mu, \nu) \in \Delta$ be any feasible solution. By Lemma~\ref{lem:superposition} and representation \eqref{eq:measure_rep}, there exists a probability measure $\eta$ on the path space $\Gamma_x$ such that:
\[
\langle g_{\mu_k}, \mu \rangle + \langle \Psi, \nu \rangle = \int_{\Gamma_x} C(\gamma) \, d\eta(\gamma),
\]
where $C(\gamma)$ is defined in Corollary~\ref{cor:fw_lower_bound}. The measure $\eta$ is consistent with the initial distribution, satisfying $(e_0)_\#\eta = \rho_0$.

By applying the pointwise lower bound $C(\gamma) \ge V(\gamma(0))$ from Corollary~\ref{cor:fw_lower_bound} to $\eta$-almost every trajectory, we obtain the following sequence of relations:
\begin{align*}
&\langle g_{\mu_k}, \mu \rangle + \langle \Psi, \nu \rangle 
= \int_{\Gamma_x} C(\gamma) \, d\eta(\gamma) \\
&\ge \int_{\Gamma_x} V(\gamma(0)) \, d\eta(\gamma)= \int_{\mathcal X} V(\xi) \, d\rho_0(\xi),
\end{align*}
where the final equality follows from the change of variables under the initial evaluation map $e_0(\gamma) = \gamma(0)$.

By construction, the aggregated measures $(\bar\mu_k, \bar\nu_k)$ are formed by a measurable selection of classical trajectories $\omega_k(\xi)$ that attain the minimum $V(\xi)$ for $\rho_0$-almost every $\xi$. By the linearity of the integral, their exact cost is:
\[
\langle g_{\mu_k}, \bar\mu_k \rangle + \langle \Psi, \bar\nu_k \rangle = \int_{\mathcal X} V(\xi) \, d\rho_0(\xi).
\]
Combining this with the global lower bound established above, we conclude that for all feasible $(\mu,\nu) \in \Delta$,
\[
\langle g_{\mu_k}, \mu \rangle + \langle \Psi, \nu \rangle \ge \langle g_{\mu_k}, \bar\mu_k \rangle + \langle \Psi, \bar\nu_k \rangle,
\]
which proves that $(\bar\mu_k, \bar\nu_k)$ is a minimizer for the FW subproblem. \hfill$\square$

\section{Proof of Theorem~\ref{thm:fw-rate}}\label{app:fw-proof}
Let $z_k:=(\mu_k,\nu_k)\in\Delta$ and let $s_k:=(\bar\mu_k,\bar\nu_k)\in\Delta$
be the solution returned by the FW linear minimization oracle, so that
$z_{k+1}=(1-\alpha_k)z_k+\alpha_k s_k$.
By $L$-smoothness of $J$ on $\Delta$ (Definition~\ref{def:Lsmooth}),
\begin{align*}
J(z_{k+1})-J(z_k)
&\le \delta J(z_k;\,z_{k+1}-z_k)+\frac{L}{2}\|z_{k+1}-z_k\|^2 \\
&=\alpha_k\,\delta J(z_k;\,s_k-z_k)+\frac{L}{2}\alpha_k^2\|s_k-z_k\|^2 \\
&\le \alpha_k\,\delta J(z_k;\,z^*-z_k)+\frac{L}{2}\alpha_k^2\|s_k-z_k\|^2 \\
&\le \alpha_k\,(J^*-J(z_k))+\frac{L}{2}\alpha_k^2\,(2(T+1))^2,
\end{align*}
where the first inequality is $L$-smoothness, the equality uses $z_{k+1}-z_k=\alpha_k(s_k-z_k)$,
the second inequality uses optimality of $s_k$ for the linear subproblem, the third inequality
uses convexity ($\delta J(z_k;z^*-z_k)\le J(z^*)-J(z_k)$), and the last term follows from
\[
\|s_k-z_k\|\le \|\mu[\omega_k]-\mu_k\|_{\mathrm{TV}}+\|\nu[\omega_k]-\nu_k\|_{\mathrm{TV}}
\le 2(T+1).
\]
Let $\Delta_k:=J(z_k)-J^*$. Then
\[
\Delta_{k+1}\le (1-\alpha_k)\Delta_k + 2L(T+1)^2\,\alpha_k^2.
\]
With $\alpha_k=\frac{2}{k+2}$, a standard induction yields
\[
\Delta_k \le \frac{8L(T+1)^2}{k+2}, \qquad k\ge 1,
\]
which completes the proof. \hfill$\square$

\end{document}